\DeclareMathOperator{\Pic}{Pic}
\DeclareMathOperator{\Coker}{Coker} 
\DeclareMathOperator{\Norm}{Norm} 
\newcommand{\Z}{\mathbb{Z}}
\newcommand{\F}{\mathbb{F}}
\newcommand{\A}{\mathbb{A}}
\newcommand{\Q}{\mathbb{Q}}
\newcommand{\V}{\mathcal{V}}
\newtheorem{thm}{Theorem}[section]
\newtheorem{prop}[thm]{Proposition}
\newtheorem{lemma}[thm]{Lemma}
\newtheorem{cor}[thm]{Corollary}
\newtheorem{con}[thm]{Conjecture}
\theoremstyle{definition}
\newtheorem{rem}[thm]{Remark}
\begin{document}

\title{\bf On Kervaire--Murthy conjecture, Bernoulli and Iwasawa numbers,
and zeroes of $p$-adic $L$-function}

\author{\Large A. Stolin \\
Department of Mathematical Sciences\\
Chalmers University of Technology and\\ University of Gothenburg\\
Gothenburg, Sweden\\
E-mail: alexander.stolin@gu.se}
\date{}

\maketitle

\begin{abstract}
The aim of the present paper is to establish relations between Iwasawa and Bernoulli numbers
based on some results by M. Kervaire and M. P. Murthy about the structure of the $K_0$ groups
of the integer group rings
of cyclic groups of prime power order $p^n .$ In particular, we will prove that
\begin{itemize}
  \item  $\lambda_{i}\leq
p-1$ under assumption that the generalized Bernoulli number $B_{1,\omega^{-i}}$ is not divisible by $p^2$. Here  $\omega$ is
the Teichm\"{u}ller character of ${\Z}/(p-1){\Z}$.
  \item $\lambda_{i}=1$ if  $B_{1,\omega^{-i}}$ is  divisible by $p^2$.
  \item We will prove that $S_{n,i}\cong \Z/(p^{n+k_i})$, where $S_n$ is the Sylow
  $p$-subgroup of the class group of the field
  $\Q(\zeta_n)$.
  Here, $\zeta_n$ is
  a primitive $p^{n+1}$-root of unity,
  $\varepsilon_{i}$ are idempotents
in the group ring ${\Z}_{p}[{\rm Gal}(\Q (\zeta_0) /\Q)]$, $S_{n,i}=\varepsilon_i (S_n)$,
and $k_i$ is the $p$-adic valuation of $B_{1,\omega^{-i}}$.
\item At the end we will prove that $k_i \leq 1$ and also $v_p (L_p (0, \omega^j))\leq 1$ for
even $j$ under certain conditions on zeroes of $L_p (0, \omega^j) .$
\item Throughout the paper we assume that $p$ satisfies Vandiver's conjecture.

\end{itemize}
{\em MSC 2000} Primary 11R23, 11R29. Secondary 19A31.
\end{abstract}

\section{Introduction}
Let $C_{n}$ denote the cyclic group of order $p^{n}$, where $p$ is an odd prime.
Let ${\Z} C_{n}$ be the integral group ring of $C_{n}$.

In this paper we study $\Pic$ ${\Z} C_{n}$ and some other groups related to
it, in particular, the ideal class group $C(F_{n})$ of the cyclotomic field
$F_{n}=\mathbb{Q}(\zeta_{n})$, where $\zeta_{n}$ is a primitive $p^{n+1}$-st
root of unity.

Throughout this paper we assume that $p$ is semi-regular, that is $p$ does
not divide the order of the ideal class group of the maximal real subfield
$F^{+}_{0}=\mathbb{Q}(\zeta_{0}+\zeta_{0}^{-1})$ in $F_{0}$. Let $A$ be an
abelian group. The following notation will be used in our paper:

\begin{itemize}
\item $F_{n}$ has been already defined, $F^{+}_{n}=\mathbb{Q}(\zeta_{n}+\zeta_{n}^{-1})$;
\item $\F_p =\Z/p\Z$;
\item $N\cdot A$ (or sometimes $A^N$, if it is clear from a context) is the direct sum of $N$ copies of $A$;
\item $dA$ or $A^{d}$ (depending on additive or multiplicative operation
on $A$) stands for the subgroup of $A$ which consists of the elements of the
form $da$ or $a^{d}$;
\item $A^{(d)}$ stands for the subgroup of $A$ which consists of the
elements of $A$ such that $da=0$ or $a^{d}=1$;
\item $A_{(p)}$ denotes the Sylow $p$-component of $A$. For $A=C(F_{n})$
we use a special notation $C(F_{n})_{(p)}=S(F_{n})=S_n$;
\item if $R$ is a commutative ring, then $U(R)$ denotes the group of units of $R$.
\item in the special case $R={\Z}[\zeta_{n}]$, we use $E_{n}$ for
$U({\Z}[\zeta_{n}])$;
\item further, we use notation $E_{n,k}$ for the subgroup of
$E_{n}$ consisting of units which are congruent to 1 modulo
$\mu^{k}_{n}=(1-\zeta_{n})^{k}$.
\end{itemize}


Following \cite{KM} let us consider the fibre product diagram
$$
\xymatrix{{\Z}C_{n+1} \ar[r]^{i_{2}}
\ar[d]^{i_{1}} &
{\Z}[\zeta_{n}] \ar[d]^{j_{2}} \\
{\Z}C_{n} \ar[r]^{j_{1}}  &
\frac{{\F}_{p}[x]}{(x-1)^{p^{n}}}:=R_{n}}
$$
with obvious maps $i_{1}, i_{2}, j_{1}, j_{2}$.
The corresponding Mayer-Vietoris exact sequence can be written as follows:
$$
U({\Z}C_{n}) \times E_{n} \stackrel{j}{\longrightarrow}
U(R_{n}){\longrightarrow} \Pic ({\Z}C_{n+1}) {\longrightarrow} \Pic
({\Z}C_{n}) \times C(F_{n}) {\longrightarrow} 0.
$$
One of the main problems in computing $\Pic ({\Z}C_{n+1})$ is thus to
evaluate the co\-kernel $W_{n}$ of the map $j:U({\Z}C_{n})\times
E_{n}{\longrightarrow} U(R_{n})$

Instead of $W_{n}$ we will evaluate a bigger group
$$\mathcal{V}_{n}=\Coker\{j_{2}:E_{n}{\longrightarrow} U(R_{n})\}.$$
Clearly, $W_{n}$ is a factorgroup of $\mathcal{V}_{n}$.

In the calculation of $\mathcal{V}_{n}$ a decisive role will be played by
the action $G_{n}={\rm Gal}(F_{n}/\mathbb{Q})$ on the various rings involved in
the paper. Let\newline
$\delta : G_{n} {\longrightarrow} U({\Z}/p^{n+1}{\Z})$ be the
canonical isomorphism defined by $s(\zeta_{n})=\zeta^{\delta(s)}_{n}$, $s\in
G_{n}$. We will denote by $x_{n}$ the generator in ${\Z}[x]/(x^{p^{n}-1}) =
{\Z}C_{n}$ and in ${\F}_{p}[x]/(x-1)^{p^{n}}=R_{n}$ that corresponds to $x$.
Since $\delta(s)$ is an integer modulo $p^{n+1}$, prime to $p$, it is clear
that both $x^{\delta(s)}_{n+1}$ and $x^{\delta(s)}_{n}$ are well-defined.
Moreover, the maps in the fibre product above commute with the action of
$G_{n}$. Let $c\in G_{n}$ be the complex conjugation. It is clear that
$\mathcal{V}_{n}=\mathcal{V}^{+}_{n}\times \mathcal{V}^{-}_{n}$, where
$\mathcal{V}_{n}^{+}$ consists of elements such that $c(a)=a$ and
$\mathcal{V}^{-}_{n}$ consists of elements such that $c(a)=a^{-1}$ (we take
into account that $\mathcal{V}_{n}$ is a $p$-group). Similarly,
$W_{n}=W^{+}_{n}\times W^{-}_{n}$. For any abelian group $A$, let us denote
by $A^{\ast}$ the group of characters of $A$.

The main results proved by Kervaire and Murthy in \cite{KM} was

\begin{thm}
If $p$ is a semi-regular odd prime, then
$$
(W^{+}_{n})^{\ast} \subseteq (\mathcal{V}^{+}_{n})^{\ast} \subseteq
S^{-}(F_{n-1})=S(F_{n-1}) =: S_{n-1}.
$$
In other words, there is a surjection $S_{n-1}^{\ast}\to \mathcal{V}_n^+$
\end{thm}

They also conjectured that, in fact, $W^{+}_{n} \cong \mathcal{V}^{+}_{n}
\cong S^{\ast}_{n-1}$. The first main result of our paper is a weak version
of the Kervaire and Murthy conjecture, namely
$$
(S_{n-1})^{(p)} \cong (\mathcal{V}^{+}_{n} /
(\mathcal{V}^{+}_{n})^{p})^{\ast} =((\mathcal{V}^{+}_{n})^{\ast})^{(p)}
$$
Another important result proved in this paper (which gives a new link between
the class groups and the groups $\mathcal{V}_n$)  is that there exists a
canonical embedding $$S_{n-1}^{(p)}\to \mathcal{V}_n^{-} /(\mathcal{V}_n^-)^p $$
Working on the Kervaire and Murthy  conjecture, Ullom proved in \cite{U} that under certain assumptions
on the Iwasawa numbers $\lambda_i$ explained later, the group $W^+_n $ can be described
as follows:
$$ W^{+}_{n}\cong
r_{0}\cdot ({\Z} /p^{n}{\Z}) \oplus (\lambda - r_0) \cdot ({\Z}
/p^{n-1}{\Z}).
$$
Here
$$
r_{0}=\dim_{{\F}_{p}}(S_{0})_{(p)}=\dim_{{\F}_{p}}(S_{0}/S^{p}_{0}),\  \lambda=\sum \lambda_i .
$$
Notice that $r_0$ also coincides with the number of Bernoulli numbers among
$B_{2}$, $B_{4}, \ldots, B_{p-3}$ which are divisible by $p$. The Iwasawa
invariant $\lambda$ can be defined as follows. It is well-known due to
Iwasawa and Washington (see  \cite{W}) that there exist two numbers $\lambda$
and $\nu$ called Iwasawa invariants such that $S_{n}$ has $p^{\lambda
n+\nu}$ elements for sufficiently large $n$.

Ullom's proof is based on certain assumptions about the Iwasawa number $\lambda$.
More exactly,
$$
G_{0}={\rm Gal}(F_{0}/\mathbb{Q}) \cong {\Z}/(p-1){\Z}
$$
acts on $S_{n}$ and
$$S_{n}=\bigoplus_{i=0}^{p-2} S_{n, i},$$
where $S_{n, i}=\varepsilon_{i} S_{n}$ and $\varepsilon_{i}$ are idempotents
in the group ring ${\Z}_{p}[G_{0}]$. Since we work with semi-regular $p$,
$$
\varepsilon_{i}S_{0}\cong {\Z}_{p}/B_{1,\omega^{-i}} {\Z}_{p} \ \ \ \hbox
{for} \ \ \ i=3,5,\ldots,p-2.
$$

Here $B_{1,\omega^{-i}}$ are generalized Bernoulli numbers and $\omega$ is
the Teichm\"{u}ller character of ${\Z}/(p-1){\Z}$ (see \cite{W}).

Furthermore, for each $i$ there exist $\lambda_{i}$ and $\nu_{i}$ such that
$S_{n, i}$ contains $p^{\lambda_{i}n+\nu_{i}}$ elements. Ullom's assumption
was that $\lambda_{i}<p-1$ and he conjectured that it was true for any $p$.
In this paper we will  prove that $\lambda_{i}\leq
p-1$ under assumption that $B_{1,\omega^{-i}}$ is not divisible by $p^2$.
Then we will prove that $\lambda_{i}=1$ if $B_{1,\omega^{-i}}$ is  divisible by $p^2$  that
provides almost a complete proof of  Ullom's inequality under the assumption that Vandiver's conjecture is true.
\begin{rem}
If $G_0$ acts on an abelian $p$-group $X,$ then
$X=\bigoplus_{i=0}^{p-2} X_{ i}$
with $X_{ i}=\varepsilon_{i} X.$
\end{rem}
In our paper we will need the following presentation of $S_{n,i}$ (see \cite{W} for details).
Let $\omega$ be the Teichm\"uller character and $P_n (T)=(T+1)^{p^n} -1$.
Let $f_i (T)\in \Z_p [[T]]$ be defined by the relation $f_i ((1+p)^s -1)=L_p (s,\omega^{1-i}) , $
where $L_p$ is the $p$-adic L-function.  In this terms the Iwasawa number $\lambda_i$ is the
first coefficient of $f_i (T)$, which is not divisible by $p$.

By the $p$-adic Weierstrass Preparation Theorem $f_i(T)=U(T)p_i (T)$, where
$U(T)$ is an invertible element of $\Z_p [[T]]$ such that $U(0)=1$ and $p_i(T)$ is a unique polynomial of
degree $\lambda_i$ with the leading coefficient co-prime to $p$ and all other coefficients divisible
by $p$. Then
$$S_{n,i}=\Z_p[[T]]/(f_i,P_n)=\Z_p [T]/(p_i(T),P_n(T)).$$

\section{Second presentation of $\mathcal{V}_{n}$ and norm maps}

The following lemma was proved in \cite{S1}.

\begin{lemma}
Let $A_{n}={\Z}[x]/(\frac{x^{p^{n}}-1}{x-1})$. Then $\Pic {\Z}C_{n}\cong
\Pic A_{n}$
\end{lemma}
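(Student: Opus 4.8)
The plan is to realise the natural surjection $\Z C_{n}\twoheadrightarrow A_{n}$ as one side of a Milnor (conductor) square and then read off the isomorphism from the associated units--Picard Mayer--Vietoris sequence. Write $\Z C_{n}=\Z[x]/(x^{p^{n}}-1)$ and factor $x^{p^{n}}-1=(x-1)N$ with $N=\frac{x^{p^{n}}-1}{x-1}$. Putting $I=(x-1)$ and $J=(N)$ in $\Z[x]$, one has $\Z[x]/I=\Z$, $\Z[x]/J=A_{n}$ and $\Z[x]/(I+J)=\Z[x]/(x-1,N)=\Z/p^{n}\Z$, the last equality because $N$ takes the value $N(1)=p^{n}$ at $x=1$. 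The first step is to verify that the square
$$
\xymatrix{\Z C_{n} \ar[r] \ar[d] & A_{n} \ar[d]\\ \Z \ar[r] & \Z/p^{n}\Z}
$$
is cartesian, equivalently that $I\cap J=IJ$ in $\Z[x]$. For this, suppose $f=(x-1)g=Nh$ with $g,h\in\Z[x]$. Since $N(1)=p^{n}\neq 0$, the polynomials $x-1$ and $N$ are coprime in $\mathbb{Q}[x]$; as $N\mid(x-1)g$ (because $(x-1)g=Nh$), coprimality gives $N\mid g$ in $\mathbb{Q}[x]$, say $g=Nq$ with $q\in\mathbb{Q}[x]$. Then $f=(x-1)Nq=(x^{p^{n}}-1)q$, and since $x^{p^{n}}-1$ is monic, $q\in\Z[x]$, so $f\in(x^{p^{n}}-1)=IJ$; the reverse inclusion is clear. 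As $\Z\to\Z/p^{n}\Z$ is surjective, the square is a genuine Milnor square.

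Next I would invoke the standard units--Picard Mayer--Vietoris sequence attached to a Milnor square. Because $\Z$ is a principal ideal domain and $\Z/p^{n}\Z$ is a commutative local ring, $\Pic(\Z)=\Pic(\Z/p^{n}\Z)=0$, and the sequence reduces to
$$
U(\Z C_{n})\to U(\Z)\times U(A_{n})\xrightarrow{\ \partial\ }U(\Z/p^{n}\Z)\to\Pic(\Z C_{n})\to\Pic(A_{n})\to 0 .
$$
Thus $\Pic(\Z C_{n})\cong\Pic(A_{n})$ will follow as soon as one shows that $\partial$ is zero, and for that it is enough that $U(\Z)\times U(A_{n})\to U(\Z/p^{n}\Z)$ be onto; I claim that already $U(A_{n})\to U(\Z/p^{n}\Z)$ is onto.

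The one place where a small observation is needed is that, although the partial sums $u_{a}=1+x+\dots+x^{a-1}$ with $\gcd(a,p)=1$ are in general \emph{not} units of $\Z C_{n}$, their images \emph{are} units of $A_{n}$: choosing $b\geq 1$ with $ab\equiv 1\pmod{p^{n}}$ and writing $ab=1+kp^{n}$, expanding the product and using the relations $x^{p^{n}}=1$ and $1+x+\dots+x^{p^{n}-1}=0$ valid in $A_{n}$ gives
$$
u_{a}\cdot\bigl(1+x^{a}+x^{2a}+\dots+x^{(b-1)a}\bigr)=1+k\bigl(1+x+\dots+x^{p^{n}-1}\bigr)=1
$$
in $A_{n}$, so $u_{a}\in U(A_{n})$. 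Under the reduction $A_{n}\to A_{n}/(x-1)A_{n}=\Z/p^{n}\Z$, which sends $x\mapsto 1$, the unit $u_{a}$ maps to $a\bmod p^{n}$, and as $a$ runs through the integers prime to $p$ these residues exhaust $U(\Z/p^{n}\Z)$. Hence $\partial=0$ and $\Pic(\Z C_{n})\cong\Pic(A_{n})$. Apart from the elementary divisibility argument for $I\cap J=IJ$, everything here is formal; the step I expect to be the genuine obstacle, should one try to shorten the argument, is precisely this last point --- that the units realising the surjection onto $U(\Z/p^{n}\Z)$ must be sought in $A_{n}$ rather than in $\Z C_{n}$, since the cyclotomic-type elements $u_{a}$ become invertible only once the norm element $N$ has been killed.
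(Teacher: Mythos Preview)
Your argument is correct. The Milnor square is genuinely cartesian by the $I\cap J=IJ$ computation, the Mayer--Vietoris sequence applies, and your key observation that the cyclotomic-type elements $u_a$ become units only in $A_n$ (not in $\Z C_n$) and surject onto $U(\Z/p^n\Z)$ is exactly what is needed. One small notational slip: you label the map $U(\Z)\times U(A_n)\to U(\Z/p^n\Z)$ as $\partial$ and then say ``$\partial=0$'', whereas what you actually prove (and need) is that this map is \emph{surjective}, so that the true connecting homomorphism $U(\Z/p^n\Z)\to\Pic(\Z C_n)$ vanishes; the mathematics is fine, only the label is misplaced.

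As for comparison with the paper: the present paper does not prove this lemma at all but simply cites \cite{S1}. The fibre-product/Mayer--Vietoris method you use is precisely the machinery employed throughout both this paper and \cite{S1,S2}, so your proof is almost certainly the intended one; in any case it is the standard and natural argument.
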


From now on we will study $A_{n}$ instead of ${\Z}C_{n}$. Clearly,
we have the following fibre product:
\begin{equation}\label{**}
\xymatrix{A_{n+1} \ar[r]^{i_{2}}
\ar[d]^{i_{1}} &
{\Z}[\zeta_{n}] \ar[d]^{j_{2}} \\
A_{n} \ar[r]^{\hskip -1cm j_{1}} &
\frac{{\F}_{p}[x]}{(x-1)^{p^{n}-1}}:={R'_{n}}}
\end{equation}

\begin{lemma}\label{unit}
$\Coker \{ j_{2}:{\Z}[\zeta_{n}] \rightarrow U({\F}_{p}[x]/ (x-1)^{p^{n}-1})
\} \cong \mathcal{V}_{n}$.
\end{lemma}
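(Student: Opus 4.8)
The statement to prove is that the cokernel of $j_2 \colon U(\Z[\zeta_n]) \to U(R_n)$, where now $R_n = \F_p[x]/(x-1)^{p^n-1}$ comes from the second fibre product~\eqref{**}, is isomorphic to $\mathcal{V}_n$, which was originally defined via the first fibre product with $R_n = \F_p[x]/(x-1)^{p^n}$. So the content is that replacing $\Z C_n$ by $A_n = \Z[x]/\left(\frac{x^{p^n}-1}{x-1}\right)$, which by Lemma~2.1 does not change $\Pic$, also does not change the relevant cokernel, up to the predicted shift in the exponent of $(x-1)$.

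The plan is to compare the two quotient rings directly. First I would observe that $\F_p[x]/(x-1)^{p^n}$ and $\F_p[x]/(x-1)^{p^n-1}$ differ only in the top graded piece: there is a surjection $\pi\colon \F_p[x]/(x-1)^{p^n} \twoheadrightarrow \F_p[x]/(x-1)^{p^n-1}$ with kernel the one-dimensional ideal generated by $(x-1)^{p^n-1}$. On unit groups this induces a surjection $U(\F_p[x]/(x-1)^{p^n}) \to U(\F_p[x]/(x-1)^{p^n-1})$ whose kernel is $1 + (x-1)^{p^n-1}\F_p \cong \Z/p\Z$, generated by $1+(x-1)^{p^n-1}$. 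So it suffices to check that this extra $\Z/p\Z$ lies in the image of $j_2\colon E_n \to U(\F_p[x]/(x-1)^{p^n})$ from the original Kervaire--Murthy diagram, and then $\mathcal{V}_n$ is unchanged by passing to the smaller ring. The natural candidate is the cyclotomic unit, or more simply the element $\zeta_n \in E_n$ itself together with its conjugates: under $j_2$ the image of $\zeta_n$ is $x_n \in U(\F_p[x]/(x-1)^{p^n})$, and $x_n = 1 + (x-1)$, so the subgroup generated by $\{x_n^{\delta(s)} : s \in G_n\}$ visibly surjects onto $1 + (x-1)\F_p[x]/(x-1)^{p^n}$ modulo higher terms — in particular it hits the socle element $1+(x-1)^{p^n-1}$. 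Thus the cokernel of $E_n \to U(\F_p[x]/(x-1)^{p^n})$ equals the cokernel of $E_n \to U(\F_p[x]/(x-1)^{p^n-1})$, which is the claimed isomorphism.

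More carefully, I would filter $U(\F_p[x]/(x-1)^{p^n})$ by the subgroups $1 + (x-1)^k$, note that the associated graded pieces are each $\cong \Z/p\Z$, and that the image of the principal unit $x_n = 1+(x-1)$ under the $G_n$-action — or just the image of $1-\zeta_n$ suitably normalized — is a principal unit of exact level $1$, hence its powers and translates fill up the whole group of principal units of $\F_p[x]/(x-1)^{p^n}$ that have level $\geq$ something, and in particular cover the top layer $1+(x-1)^{p^n-1}\F_p$. Since $\mathcal{V}_n$ is a $p$-group and the only difference between the two rings is concentrated in that top layer, the induced map $\Coker\{j_2 \text{ into } \F_p[x]/(x-1)^{p^n}\} \to \Coker\{j_2 \text{ into } \F_p[x]/(x-1)^{p^n-1}\}$ is an isomorphism; the latter cokernel is $\mathcal{V}_n$ with its new presentation, so we are done.

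The main obstacle will be the bookkeeping in showing that the extra top layer $1+(x-1)^{p^n-1}\F_p$ is actually in the image of $j_2$ restricted to $E_n$ (not merely in the image of $U(R_n)$): one must produce an honest cyclotomic unit whose image under $j_2$ has a nonzero $(x-1)^{p^n-1}$-component, and verify this by an explicit expansion of $\zeta_n = 1 - (1-\zeta_n)$ (or of $(1-\zeta_n^a)/(1-\zeta_n)$) in powers of $(x-1)$ modulo $p$, using $\zeta_n \leftrightarrow x_n$, $1-\zeta_n \leftrightarrow -(x-1)$, and the congruence $x_n^{p^n} \equiv 1$. Once that single explicit computation is in hand, the rest is formal diagram-chasing on the Mayer--Vietoris sequences attached to the two fibre products.
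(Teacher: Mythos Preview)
Your overall strategy is exactly the paper's: reduce to showing that the single generator $1+(x-1)^{p^{n}-1}$ of $\ker\bigl(U(\F_p[x]/(x-1)^{p^n})\to U(\F_p[x]/(x-1)^{p^n-1})\bigr)$ lies in $j_2(E_n)$. But your primary candidate for hitting that socle element is wrong. The subgroup of $U(\F_p[x]/(x-1)^{p^n})$ generated by the $x_n^{\delta(s)}$ is nothing more than the cyclic group $\langle x_n\rangle$ of order $p^{n}$, since each $\delta(s)$ is a unit mod $p^{n+1}$; this group is far too small to surject onto the $1$-units (which have order $p^{\,p^{n}-1}$), and in fact $1+(x-1)^{p^{n}-1}\notin\langle x_n\rangle$ already for $n=1$: if $(1+(x-1))^{k}=1+(x-1)^{p-1}$ then comparing the $(x-1)^{1}$-coefficients forces $k\equiv 0\pmod p$, giving $x^{k}=1$. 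So your filtration argument, as written, does not close.

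The paper avoids this by writing down the cyclotomic unit directly: it takes $\varepsilon=\dfrac{\zeta_n^{\,p^{n}+1}-1}{\zeta_n-1}\in E_n$ and checks $j_2(\varepsilon)=\dfrac{x^{p^{n}+1}-1}{x-1}$. Using $(1+y)^{p^{n}}=1+y^{p^{n}}$ in characteristic $p$ with $y=x-1$, one gets $(1+y)^{p^{n}+1}-1=y+y^{p^{n}}+y^{p^{n}+1}$, hence $j_2(\varepsilon)=1+y^{p^{n}-1}+y^{p^{n}}\equiv 1+(x-1)^{p^{n}-1}$ in $\F_p[x]/(x-1)^{p^{n}}$. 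That single line replaces your entire ``more careful'' filtration paragraph. You already flagged $(1-\zeta_n^{a})/(1-\zeta_n)$ as the fallback; the point is simply to pick $a=p^{n}+1$ and do this two-line expansion, rather than appeal to powers of $\zeta_n$ alone.
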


\begin{proof}
We have to prove that
\begin{gather*} \Coker (U(\mathbb{Z}[\zeta_n]) \to
U(\mathbb{F}_p [x] / (x-1)^{p^n})) =\\ \Coker (U(\mathbb{Z}[\zeta_n]) \to
U(\mathbb{F}_p [x] / (x-1)^{p^n -1})).
\end{gather*}

Clearly, it is sufficient to prove that the element $$1+(x-1)^{p^n -1} \in
U(\mathbb{F}_p [x] / (x-1)^{p^n})$$ is the image of some unit of
$\mathbb{Z}[\zeta_n]$. It is easy to see that the image of the unit $ \left(\frac{\zeta_n^{p^n
+1} - 1}{\zeta_n - 1}\right)$ under the map $\Z [\zeta_n]\to \F_p [x] / (x-1)^{p^n}$, $\zeta_n\to x$
is exactly $  1+(x-1)^{p^n -1},$ and the proof is
complete.
\end{proof}

\begin{rem}
This lemma justifies an abuse of notation $j_{1}, j_{2}, i_{1}, i_{2},
R'_{n}$ in (\ref{**}).
\end{rem}

The map $N_{n}:{\Z}[\zeta_{n}]\rightarrow A_{n}$ such that
$N_{n}(ab)=N_{n}(a)N_{n}(b)$ and the diagram below is commutative has been
introduced in \cite{S2}:
\begin{equation}\label{***}
\xymatrix{A_{n+1} \ar[r]^{i_{2}}
\ar[d]^{i_{1}} &
{\Z}[\zeta_{n}] \ar[d]^{j_{2}} \ar[ld]_{N_{n}} \\
A_{n} \ar[r]^{j_{1}}  &
R_{n}}
\end{equation}
We would like to remind the reader this construction. The following fibre product
diagram can be used for the construction without lost of generality:
$$
\xymatrix{ {\Z}_{p}[x]/(\frac{x^{p^{n+1}}-1}{x-1}) \ar[r]^{\hskip
1cm i_{2}} \ar[d]^{i_{1}} & {\Z_p}[\zeta_{n}] \ar[d]^{j_{2}} \\
{\Z}_{p}[x]/(\frac{x^{p^{n}}-1}{x-1}) \ar[r]^{\hskip 1cm j_{1}} & R_{n} }
$$
We construct $N_{n}$ using induction. If $n=1$, then
${\Z}_{p}[x]/(\frac{x^{p^{n}}-1}{x-1})\cong {\Z}[\zeta_{0}]$ and $N_{1}$ is
the usual norm map.

Commutativity of (\ref{***}) was proved in \cite{S1}. The formula
$$
\varphi_{1}(a_{1})=(a_{1}, N_{1}(a_{1}))\in
{\Z}_{p}[x]/(\frac{x^{p^{2}}-1}{x-1})
$$
defines an injective homomorphism $\varphi_{1}:U({\Z}[\zeta_{1}])\rightarrow
U({\Z}_{p}[x]/(\frac{x^{p^{2}}-1}{x-1}))$. Now we can define
$N_{2}(a_{2})=\varphi_{1}(\Norm_{{F_{2}}/F_{1}}(a_{2}))$.

Simultaneously, $N_{2}$ defines
$$
\varphi_{2}:U({\Z}_{p}[\zeta_{2}])\rightarrow
U({\Z}_{p}[x]/(\frac{x^{p^{3}}-1}{x-1}))
$$
via $\varphi_{2}(a_{2})=(a_{2}, N_{2}(a_{2}))\in
{\Z}_{p}[x]/(\frac{x^{p^{3}}-1}{x-1})$, and so on.

Proofs that all of the maps $\varphi_{i}, N_{i}$ are well-defined can be
found in \cite{S2}. They use rings
$A_{n,k}={\Z}[x]/(\frac{x^{p^{n+k}}-1}{x^{k}-1})$.

\begin{prop}
Formula $\varphi_{n-1}(a_{n-1})=(a_{n-1}, N_{n-1}(a_{n-1}))$ defines an
embedding $E_{n-1}\rightarrow U({\Z}[x]/(\frac{x^{p^{n}}-1}{x-1}))$, and
$\Coker \{j_{1}:E_{n-1}\rightarrow U(R_{n})\}\cong \mathcal{V}_{n}$.
\end{prop}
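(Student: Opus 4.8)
The plan is to prove the two assertions separately, using the commutativity of diagram (\ref{***}) and the Lemma above that identifies $\mathcal{V}_n$ with a cokernel. For the embedding: replacing $n$ by $n-1$ in (\ref{***}), its triangle gives $j_1\circ N_{n-1}=j_2$ on ${\Z}[\zeta_{n-1}]$, so for any $a\in E_{n-1}$ the pair $(a,N_{n-1}(a))$ satisfies the gluing condition of the fibre product (\ref{**}) (with $n-1$ in place of $n$) and hence is a well-defined element of $A_n$. Since $N_{n-1}$ is multiplicative with $N_{n-1}(1)=1$, the assignment $a\mapsto(a,N_{n-1}(a))$ is a homomorphism, and $(a^{-1},N_{n-1}(a^{-1}))$ — which again lies in $A_n$ by the same commutativity applied to $a^{-1}$ — is a two-sided inverse, so the image lands in $U(A_n)$. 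Thus $\varphi_{n-1}:E_{n-1}\to U(A_n)$ is a group homomorphism; it is injective because the projection $i_2:A_n\to{\Z}[\zeta_{n-1}]$ is a left inverse, $i_2\circ\varphi_{n-1}=\mathrm{id}$. (That all the maps $N_i,\varphi_i$ are well defined is proved in \cite{S2}.)

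For the cokernel, recall from the Lemma above that $\mathcal{V}_n\cong\Coker\{j_2:E_n\to U(R_n)\}=U(R_n)/j_2(E_n)$. By commutativity of (\ref{***}) we have $j_2=j_1\circ N_n$ on ${\Z}[\zeta_n]$, and by construction $N_n=\varphi_{n-1}\circ\Norm_{F_n/F_{n-1}}$, whence
$$
j_2(E_n)=j_1\bigl(\varphi_{n-1}(\Norm_{F_n/F_{n-1}}(E_n))\bigr)\subseteq j_1(\varphi_{n-1}(E_{n-1})).
$$
This produces a canonical surjection $\mathcal{V}_n=U(R_n)/j_2(E_n)\twoheadrightarrow U(R_n)/j_1(\varphi_{n-1}(E_{n-1}))=\Coker\{j_1:E_{n-1}\to U(R_n)\}$, giving one inclusion of subgroups for free.

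The remaining point — and the heart of the matter — is that this surjection is an isomorphism, i.e.\ that $j_1(\varphi_{n-1}(E_{n-1}))=j_2(E_n)$ inside $U(R_n)$, equivalently $E_{n-1}=\Norm_{F_n/F_{n-1}}(E_n)\cdot\Ker(j_1\circ\varphi_{n-1})$. The kernel here consists of $b\in E_{n-1}$ with $\varphi_{n-1}(b)\equiv 1\pmod{pA_n}$; unwinding the fibre product shows it contains every unit congruent to $1$ modulo a fixed power of $\mu_{n-1}$. So the claim reduces to the arithmetic fact that units of $F_{n-1}$ which lie deep enough in the $\mu_{n-1}$-adic filtration are norms from $F_n$: for the totally ramified degree-$p$ extension $F_n/F_{n-1}$ this follows from local class field theory at the unique prime above $p$, the semi-regularity of $p$ being used to control the passage from local to global; alternatively it can be extracted from \cite{S2}, where the rings $A_{n,k}={\Z}[x]/(\frac{x^{p^{n+k}}-1}{x^{k}-1})$ were introduced exactly to force this identification. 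This norm-surjectivity step is the genuine obstacle; everything else is formal.
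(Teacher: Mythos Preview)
Your setup is correct and more explicit than the paper's: the embedding via the fibre product, the factorisation $j_2=j_1\circ\varphi_{n-1}\circ\Norm_{F_n/F_{n-1}}$, and the resulting surjection $\mathcal{V}_n\twoheadrightarrow\Coker(j_1)$ are all fine. The trouble is in your last paragraph. You correctly identify that what remains is $E_{n-1}=\Norm_{F_n/F_{n-1}}(E_n)\cdot\Ker(j_1\circ\varphi_{n-1})$, but your ``reduction'' to the statement that \emph{units of $F_{n-1}$ lying deep in the $\mu_{n-1}$-adic filtration are norms from $F_n$} does not give this. Knowing $E_{n-1,m}\subseteq\Norm(E_n)$ for large $m$ only tells you that $\Norm(E_n)\cdot\Ker\supseteq E_{n-1,m}\cdot\Ker$; it says nothing about units outside the deep filtration. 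The local class field theory you invoke controls local norms, but the passage to global units of $F_{n-1}$ is exactly where the difficulty sits, and your gesture toward ``semi-regularity controls the local-to-global step'' is not an argument.

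The paper's proof is a single stroke that bypasses all of this: for semi-regular $p$ one has the \emph{full} surjectivity $\Norm_{F_n/F_{n-1}}(E_n)=E_{n-1}$ (a classical consequence of the hypothesis that $p\nmid h(F_0^+)$, propagated up the cyclotomic tower). Granting that, your factorisation immediately gives $j_2(E_n)=j_1(\varphi_{n-1}(E_{n-1}))$ and hence the isomorphism of cokernels, with no need to analyse $\Ker(j_1\circ\varphi_{n-1})$ at all. So the fix is simply to replace your final paragraph by a citation of norm-surjectivity of global units under the semi-regular assumption.
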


\begin{proof} Since we deal with semi-regular primes, the fact we need follows
from that of $\Norm_{{F_{n}}/F_{n-1}}(E_{n})=E_{n-1}$ and thus,
$j_{2}(E_{n})=j_{1}(E_{n-1})$ in $U(R_{n})$.
\end{proof}

Let us denote by $U_{n,k}$ the subgroup of $U({\Z}_{p}[\zeta_{n}]):=U_n ,$ which
consists of units congruent to 1 modulo $(\zeta_{n}-1)^{k}=\mu^{k}_{n}$.

\begin{thm}
We have
$$
\mathcal{V}_{n}\cong U_{n}/(U_{n,p^{n}-1}\cdot E_{n})\cong
U_{n-1}/(U_{n-1,p^{n}-1}\cdot E_{n-1})
$$
\end{thm}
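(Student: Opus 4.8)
The plan is to treat the two isomorphisms separately: the first is essentially a rewriting of $\mathcal V_n$, while the second rests on the behaviour of the norm $N:=\Norm_{F_n/F_{n-1}}$ on principal units.

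\smallskip\noindent\emph{First isomorphism.} Since $p$ is totally ramified in $F_n$ with ramification index $p^n(p-1)\ge p^n-1$, one has $p\in(\mu_n^{p^n-1})$, so the ring map $j_2$ factors as $\Z[\zeta_n]\hookrightarrow\Z_p[\zeta_n]\twoheadrightarrow\Z_p[\zeta_n]/(\mu_n^{p^n-1})$, the last ring being canonically $R_n$. As $\Z_p[\zeta_n]$ is a complete discrete valuation ring, $j_2$ is onto on unit groups with kernel exactly $U_{n,p^n-1}$; thus $U(R_n)\cong U_n/U_{n,p^n-1}$ compatibly with the homomorphism out of $E_n$ (which sits inside $U_n$ via the same embedding, $p$ being ramified in $F_n$). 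Passing to cokernels, $\mathcal V_n=U(R_n)/j_2(E_n)\cong U_n/(U_{n,p^n-1}\cdot E_n)$.

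\smallskip\noindent\emph{Second isomorphism.} Regard $N$ as a multiplicative map $\Z_p[\zeta_n]\to\Z_p[\zeta_{n-1}]$ on $p$-adic completions. I claim $N$ induces an isomorphism $U_n/U_{n,p^n-1}\xrightarrow{\sim}U_{n-1}/U_{n-1,p^n-1}$. It is well defined because $N(U_{n,p^n-1})\subseteq U_{n-1,p^n-1}$: writing a principal unit as $1+y$ with $y\in(\mu_n^{p^n-1})$ and expanding
$$
N(1+y)=\prod_{\sigma\in\Gal(F_n/F_{n-1})}(1+\sigma y)=1+e_1+\cdots+e_p,
$$
Newton's identities express each $e_j$ through the power sums $\operatorname{Tr}_{F_n/F_{n-1}}(y^i)$; since $\zeta_n$ has the Eisenstein minimal polynomial $(X+1)^p-\zeta_{n-1}$ over $F_{n-1}$, the relative different of $F_n/F_{n-1}$ equals $(p)$, hence $\operatorname{Tr}_{F_n/F_{n-1}}$ sends $(\mu_n^k)$ into $(\mu_{n-1}^{\lfloor(k+p^n(p-1))/p\rfloor})$, and a short estimate then puts every $e_j$ in $(\mu_{n-1}^{p^n-1})$. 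Next, $U_n/U_{n,p^n-1}$ and $U_{n-1}/U_{n-1,p^n-1}$ both have order $(p-1)p^{p^n-2}$ (the residue field is $\F_p$ in both cases), so it suffices to prove surjectivity, i.e. $N(U_n)\cdot U_{n-1,p^n-1}=U_{n-1}$. By local class field theory $[U_{n-1}:N(U_n)]=p$ — the extension is cyclic of degree $p$ and $N(\mu_n)$ is a uniformiser of $F_{n-1}$ — whereas the conductor of $F_n/F_{n-1}$ is $(\mu_{n-1}^{p^n})$, which follows from the different computation above via the conductor--discriminant formula; therefore $U_{n-1,p^n-1}\not\subseteq N(U_n)$, so $U_{n-1,p^n-1}$ surjects onto the quotient $U_{n-1}/N(U_n)$ (which has order $p$), giving $N(U_n)\cdot U_{n-1,p^n-1}=U_{n-1}$.

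\smallskip\noindent\emph{Conclusion.} Semi-regularity of $p$ gives $\Norm_{F_n/F_{n-1}}(E_n)=E_{n-1}$ (the fact used in the proof of the preceding Proposition), so the isomorphism above carries the image of $E_n$ in $U_n/U_{n,p^n-1}$ onto the image of $E_{n-1}$ in $U_{n-1}/U_{n-1,p^n-1}$; quotienting out these images yields $U_n/(U_{n,p^n-1}\cdot E_n)\cong U_{n-1}/(U_{n-1,p^n-1}\cdot E_{n-1})$, which together with the first part completes the proof. The step I expect to be the main obstacle is the control of $N$ on the unit filtration -- simultaneously $N(U_{n,p^n-1})\subseteq U_{n-1,p^n-1}$ and $N(U_n)\not\subseteq U_{n-1,p^n-1}$ -- since that is exactly where the precise exponent $p^n-1$ must be matched against the ramification data of $F_n/F_{n-1}$; the remaining steps are formal.
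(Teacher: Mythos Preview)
Your argument is correct, but it follows a different path from the paper's own proof. The paper does not compare $U_n$ and $U_{n-1}$ via the relative norm $\Norm_{F_n/F_{n-1}}$; instead it uses the embedding $\varphi_{n-1}:U_{n-1}\to U(\Z_p[x]/(\tfrac{x^{p^n}-1}{x-1}))$, $a\mapsto (a,N_{n-1}(a))$, built from the generalized norm map $N_{n-1}:\Z[\zeta_{n-1}]\to A_{n-1}$ constructed earlier from \cite{S2}, composes with $j_1$ into $U(R_n)$, and shows that the kernel of $j_1\circ\varphi_{n-1}$ is exactly $U_{n-1,p^n-1}$ by a cardinality count together with an inclusion proved in \cite{S2}. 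Your route replaces that machinery by a direct local computation: the different of $F_n/F_{n-1}$ is $(p)$, hence the conductor is $(\mu_{n-1}^{p^n})$, so $U_{n-1,p^n-1}\not\subseteq N(U_n)$ while $[U_{n-1}:N(U_n)]=p$, which together with the trace estimate forcing $N(U_{n,p^n-1})\subseteq U_{n-1,p^n-1}$ and the equal cardinalities gives the isomorphism on unit quotients. Your approach is more self-contained (no appeal to the results of \cite{S2}) and explains transparently why the precise level $p^n-1$ is the right one --- it is one below the conductor exponent --- whereas the paper's approach has the advantage of staying inside the $\varphi_{n-1}$/$N_{n-1}$ framework that is reused throughout. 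One small remark: in your Newton-identity step the naive valuation bound on $e_j$ is not enough for $j\ge 2$; you really do need the trace bound $\operatorname{Tr}((\mu_n^k))\subseteq(\mu_{n-1}^{\lfloor (k+p^n(p-1))/p\rfloor})$ on the power sums and then the recursion $je_j=\sum_{i}(-1)^{i-1}e_{j-i}p_i$ with $j<p$ a $p$-adic unit (and $e_p=N(y)$ handled directly), exactly as you indicate.
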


\begin{rem}
We remind the reader that $\mathcal{V}_{n}\cong U_{n}/(U_{n,p^{n}}\cdot
E_{n})$ by definition.
\end{rem}

\begin{proof} The first isomorphism is clear. Let us prove that
$\mathcal{V}_{n}\cong U_{n-1}/(U_{n-1,p^{n}-1}\cdot E_{n-1})$. The formula
$\varphi_{n-1}(a)=(a,N_{n-1}(a))$ defines an embedding
$\varphi_{n-1}:U_{n-1}\rightarrow U({\Z}_{p}[x]/(\frac{x^{p^{n}}-1}{x-1}))$.

It is sufficient to prove that the composition map $\varphi_{n-1}\cdot
j_{1}$ has the kernel $U_{n-1, p^{n}-1}$. To do this, first we note that
$U(R_{n})$ and $U_{n-1}/U_{n-1, p^{n}-1}$ have the same number of elements.
Therefore, it is enough to prove that $U_{n-1, p^{n}-1}$ is contained in the
kernel. This was proved in \cite{S2}. We would like to demonstrate the case
$n=2$.
For this, we should prove that

$(a, \Norm_{{F_{1}}/F_{0}}(a))\equiv
(1,1)\ {\rm mod}( p)$ in ${\Z}_{p}[x]/(\frac{x^{p^{2}}-1}{x-1})$ \newline
if $a\equiv 1$ mod
$\mu^{p^{2}-1}_{1}$. It is easy to see that $(a,
\Norm_{{F_{1}}/F_{0}}(a))\equiv (1,1)$ mod $p$ is equivalent to that of
$\Norm_{{F_{1}}/F_{0}}(\frac{a-1}{p})\equiv
\frac{\Norm_{{F_{1}}/F_{0}}(a)-1}{p}$ mod $p$ in ${\Z}_{p}[\zeta_{0}]$.
Since $a\equiv 1$ mod $\mu^{p^{2}-1}_{1}$, both sides are congruent to $0$
modulo $p$. The general case was proved in \cite{S2} using the rings
$A_{n,k}$ and induction in $n,k$.
\end{proof}
\begin{rem}
In fact, it is not difficult to prove that $(a, \Norm_{{F_{1}}/F_{0}}(a))\equiv
(1,1)\ {\rm mod}( p)$ in ${\Z}_{p}[x]/(\frac{x^{p^{2}}-1}{x-1})$
{\bf iff} $a\equiv 1$ mod
$\mu^{p^{2}-1}_{1}$.
\end{rem}
In the sequel we will need the following
\begin{cor}\label{N2}
Suppose $a\in U_2$ is such that $a\equiv 1 {\rm mod}\ \mu_2^{p^2 -1}.$
Then

$\Norm_{{F_{2}}/F_{1}}(a)\equiv 1 {\rm mod}\ \mu_1^{p^2 -1}.$
\end{cor}
\begin{proof}
Consider the diagram \ref{***} for $n=2$. Then\newline
$N_2 (a)=(\Norm_{{F_{2}}/F_{1}}(a),\Norm_{{F_{2}}/F_{0}}(a))\equiv (1,1) {\rm mod} (p)$ in
${\Z}_{p}[x]/(\frac{x^{p^{2}}-1}{x-1})$. \newline
Consequently,
$\Norm_{{F_{2}}/F_{1}}(a)\equiv 1\ {\rm mod}\ \mu_1^{p^2 -1}$.
\end{proof}
\section{Number of elements in $\mathcal{V}_{n}^{+}$ }

Let us introduce integers $r_{n}$ as the number of elements in $E_{n,
p^{n+1}-1}/E^{p}_{n, p^{n}+1}$. Similarly, let $r_{n,i}$ be the number of
elements in $\varepsilon_i (E_{n,
p^{n+1}-1}/E^{p}_{n, p^{n}+1} )$. In particular, it follows that $r_{n}=\sum r_{n,i}$,
$r_{0,i}=1$ if $\lambda_{p-i}>0$, otherwise $r_{0,i}=r_{k,i}=0$.

\begin{lemma}
If $\epsilon\in E_{n, p^{n}+1}$, then $\epsilon$ is real and therefore,
$E_{n, p^{n}+1}=E^{+}_{n, p^{n}+1}$.
\end{lemma}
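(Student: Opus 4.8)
The plan is to pass to the quotient $\eta := \varepsilon/c(\varepsilon)$ and to show $\eta = 1$, which is exactly the assertion $c(\varepsilon)=\varepsilon$, i.e. that $\varepsilon$ is real. First I would note that $\eta$ is a unit of $\mathbb{Z}[\zeta_n]$ all of whose archimedean absolute values equal $1$: for any embedding $\sigma\colon F_n\hookrightarrow\mathbb{C}$ complex conjugation on $\mathbb{C}$ restricts to $c$ along $\sigma$ (both send $\sigma(\zeta_n)$ to $\sigma(\zeta_n)^{-1}$), so $\sigma(c(\varepsilon))=\overline{\sigma(\varepsilon)}$ and $|\sigma(\eta)|=|\sigma(\varepsilon)|/|\overline{\sigma(\varepsilon)}|=1$. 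By Kronecker's theorem (an algebraic integer with all conjugates on the unit circle is a root of unity) $\eta$ is a root of unity, and since $p$ is odd the roots of unity of $F_n=\mathbb{Q}(\zeta_n)$ are precisely the $\pm\zeta_n^{a}$. So the whole problem reduces to excluding $\eta=\pm\zeta_n^{a}\neq 1$.

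Next I would feed in the hypothesis $\varepsilon\equiv 1\pmod{\mu_n^{p^n+1}}$. Applying $c$ and using that $1-\zeta_n^{-1}=-\zeta_n^{-1}(1-\zeta_n)$ is an associate of $\mu_n$, we also get $c(\varepsilon)\equiv 1\pmod{\mu_n^{p^n+1}}$. Subtracting, $\varepsilon-c(\varepsilon)\in\mu_n^{p^n+1}\mathbb{Z}[\zeta_n]$, and multiplying by the unit $c(\varepsilon)^{-1}\in\mathbb{Z}[\zeta_n]$ gives $\eta\equiv 1\pmod{\mu_n^{p^n+1}}$.

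The decisive point is then a local computation at $\mu_n$. Normalising the valuation $v$ of $F_n$ at $\mu_n$ by $v(\mu_n)=1$, total ramification of $p$ in $F_n/\mathbb{Q}$ (degree $p^n(p-1)$) gives $v(p)=p^n(p-1)$, and for a primitive $p^m$-th root of unity $\zeta$ with $1\le m\le n+1$ one gets, by comparing ramification in $F_n/\mathbb{Q}(\zeta)$, that $v(1-\zeta)=p^{n-m+1}$. Hence for any root of unity $\eta=\pm\zeta_n^a\neq 1$ we have $v(\eta-1)\le p^n$: if $\eta=\zeta_n^a$ this is the displayed formula with $m\ge 1$, and if $\eta=-\zeta_n^a$ then $\eta-1=-(2-(1-\zeta_n^a))$ has valuation $\min(v(2),v(1-\zeta_n^a))=0$ since $v(2)=0$ for $p$ odd. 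In all cases $v(\eta-1)\le p^n<p^n+1$, contradicting $\eta\equiv 1\pmod{\mu_n^{p^n+1}}$; therefore $\eta=1$, so $\varepsilon$ is real, and every element of $E_{n,p^n+1}$ being fixed by $c$ we get $E_{n,p^n+1}=E_{n,p^n+1}^{+}$. The one step demanding real care is this valuation estimate: the bound $v(1-\zeta_n^a)\le p^n$ is sharp (attained when $\zeta_n^a$ has order $p$), which is precisely why the exponent $p^n+1$ in the hypothesis is the right threshold; the rest is formal.
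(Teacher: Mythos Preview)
Your proof is correct. The paper states this lemma without proof, presumably regarding it as standard; your argument supplies exactly the expected details. A minor variant that the author may have had in mind: one can invoke directly the classical decomposition $\varepsilon=\zeta_n^{a}u$ with $u$ real (e.g.\ Washington, Prop.~1.5), which short-circuits the appeal to Kronecker's theorem, and then run your valuation estimate on $\varepsilon/c(\varepsilon)=\zeta_n^{2a}$ to force $a\equiv 0\pmod{p^{n+1}}$. Either way the crux is the same inequality $v(1-\zeta)\le p^{n}$ for a nontrivial $p$-power root of unity $\zeta\in F_n$, and your treatment of that step (including the case $\eta=-\zeta_n^{a}$ via $v(2)=0$) is clean.
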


\begin{thm}\label{thm}
Let $\alpha$ be an ideal of ${\Z}[\zeta_{n}]$ such that $\alpha^{p}=(q)$.
Let $q\equiv 1$ mod $\mu^{p^{n+1}-1}_{n}$. Then $q\equiv 1$ mod
$\mu^{p^{n+1}}_{n}$.
\end{thm}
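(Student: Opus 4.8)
The plan is to translate the ideal-theoretic hypothesis into a statement about the unit $q$ inside the local ring ${\Z}_p[\zeta_n]$, and then use the semi-regularity of $p$ together with the structure of the principal units to upgrade the congruence modulo $\mu_n^{p^{n+1}-1}$ to a congruence modulo $\mu_n^{p^{n+1}}$. First I would observe that $\alpha^p=(q)$ means $q$ is, up to a unit, a $p$-th power of an ideal; passing to the completion at the prime $\mu_n$, the ideal $\alpha$ becomes principal, say $\alpha{\Z}_p[\zeta_n]=(\pi)$, so $q=u\pi^p$ for a unit $u\in U_{n}$ (after absorbing the global unit). The hypothesis $q\equiv 1$ mod $\mu_n^{p^{n+1}-1}$ forces $v_{\mu_n}(\pi^p)\geq p^{n+1}-1$ unless $q$ is already a unit congruent to $1$; since $p\mid v_{\mu_n}(\pi^p)$ and $p^{n+1}-1\equiv -1$ mod $p$, the valuation of $\pi^p$ is either $0$ (so $\alpha$ is the unit ideal and $q\in U_{n,p^{n+1}-1}$) or $\geq p^{n+1}-1+1=p^{n+1}$, in which case we are immediately done. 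So the substantive case is $q\in U_{n,p^{n+1}-1}$ a global unit, and I must show such a $q$ that is a $p$-th power of a principal ideal actually lies in $U_{n,p^{n+1}}$.

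Next I would bring in the real subfield. Since $\mu_n^{p^{n+1}-1}=\mu_n^{p^{n+1}}/\mu_n$ differs from the next filtration step by a single factor, the quotient $U_{n,p^{n+1}-1}/U_{n,p^{n+1}}$ is a one-dimensional ${\F}_p$-vector space, spanned by the class of $1+\mu_n^{p^{n+1}-1}$. Complex conjugation $c$ acts on $\mu_n=1-\zeta_n$ by sending it to $1-\zeta_n^{-1}=-\zeta_n^{-1}\mu_n$, hence on $\mu_n^{p^{n+1}-1}$ by a unit times $(-1)^{p^{n+1}-1}=1$ (as $p$ is odd); so this quotient is fixed by $c$, i.e.\ it is the ``plus'' part. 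Now if $q$ is congruent to $1$ mod $\mu_n^{p^{n+1}-1}$ but \emph{not} mod $\mu_n^{p^{n+1}}$, then the image of $q$ generates this quotient. I would then use that $\alpha^p=(q)$ exhibits $q$ as $p\cdot(\text{class of }\alpha)$ in a suitable logarithmic/valuation sense — more precisely, the ideal class of $\alpha$ has order dividing $p$ in $C(F_n)$, and because the ``plus'' part $S^+(F_n)$ vanishes for semi-regular $p$, the class of $\alpha$ lies in the ``minus'' part. Combining the parity: $q$ generating a $c$-fixed (plus) quotient while simultaneously being forced by $\alpha^p=(q)$ to have its principal-unit class governed by the minus-type ideal $\alpha$ should produce a contradiction, unless the offending quotient class is trivial — which is exactly $q\equiv 1$ mod $\mu_n^{p^{n+1}}$.

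To make the last step precise I would argue as follows. Write $q=1+\beta$ with $v_{\mu_n}(\beta)=p^{n+1}-1$; then $\beta\equiv c(q)-1$ mod $\mu_n^{p^{n+1}}$ since the quotient is $c$-fixed, so $q\equiv c(q)$ mod $\mu_n^{p^{n+1}}$, i.e.\ $q/c(q)\in U_{n,p^{n+1}}$. Taking norms to the real subfield, $q\,c(q)=N_{F_n/F_n^+}(q)$ is a real unit, and $q^2\equiv q\,c(q)$ mod $\mu_n^{p^{n+1}}$, so $q$ is congruent to a real unit modulo $\mu_n^{p^{n+1}}$; in particular the ideal $\alpha^2=(q^2)=(q\,c(q))$ comes from the real subfield modulo the relevant power, and by semi-regularity (Lemma on $E_{n,p^n+1}=E^+_{n,p^n+1}$, applied at the appropriate level after raising to a $p$-power) the class of $\alpha$ in $S(F_n)$ must then be trivial in the plus part, forcing $\alpha$ principal in a way incompatible with $v_{\mu_n}(q)=p^{n+1}-1\not\equiv 0$ mod $p$ — contradiction. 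Hence the case $q\not\equiv 1$ mod $\mu_n^{p^{n+1}}$ cannot occur.

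The main obstacle I anticipate is the bookkeeping in the final paragraph: pinning down exactly how ``$q$ is congruent to a real unit modulo $\mu_n^{p^{n+1}}$'' interacts with the hypothesis $\alpha^p=(q)$ to kill the plus part of the ideal class. One must be careful that the semi-regularity input ($S^+(F_n)=0$, equivalently the statement about generalized Bernoulli numbers) is applied to the class of $\alpha$ itself and not merely to $\alpha^p$, and that the valuation parity argument ($p^{n+1}-1\equiv-1$ mod $p$) is the genuine source of the rigidity. A cleaner route, which I would pursue in parallel, is to work entirely with the logarithm on principal units: $\log$ carries $U_{n,p^{n+1}-1}$ isomorphically onto $\mu_n^{p^{n+1}-1}{\Z}_p[\zeta_n]$ in the relevant range, converts $p$-th powers of ideals into $p\cdot(\text{something})$, and converts the plus/minus decomposition into eigenspaces of $c$; the desired congruence then becomes the elementary observation that an element of $\mu_n^{p^{n+1}-1}{\Z}_p[\zeta_n]$ lying in $p\,\mu_n^{\ast}{\Z}_p[\zeta_n]\cap(\text{plus part})$ must actually lie in $\mu_n^{p^{n+1}}{\Z}_p[\zeta_n]$, because $v_{\mu_n}(p)=p^n-p^{n-1}$ pushes the valuation past $p^{n+1}-1$. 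I expect this logarithmic version to be the one that actually closes cleanly.
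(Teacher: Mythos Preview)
Your approach diverges completely from the paper's and, more importantly, never succeeds in connecting the ideal-theoretic hypothesis $\alpha^{p}=(q)$ to the local congruence you want. The paper's proof is a short Hilbert-symbol argument and does not use semi-regularity at all: since $(q)=\alpha^{p}$, every prime except $\mu_{n}$ is unramified in $F_{n}(\sqrt[p]{q})/F_{n}$; hence for any global unit $\varepsilon\in E_{n}$ the local symbols $(\varepsilon,q)_{v}$ vanish for $v\neq\mu_{n}$, and the product formula forces $(\varepsilon,q)_{\mu_{n}}=1$. Taking $\varepsilon=\zeta_{n}\in U_{n,1}\setminus U_{n,2}$ and using the explicit nondegeneracy of the local symbol on $U_{n,k}\times U_{n,p^{n+1}-k}$ for $k$ prime to $p$ (with $k=1$) gives $(\zeta_{n},q)_{\mu_{n}}\neq 1$ whenever $q\in U_{n,p^{n+1}-1}\setminus U_{n,p^{n+1}}$, a contradiction. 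That is the entire mechanism: the hypothesis $\alpha^{p}=(q)$ enters only to control ramification and hence make the product formula bite.

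In your argument that mechanism is missing. Your reduction in the first paragraph is muddled (once $q\equiv 1\bmod\mu_{n}$ it has $\mu_{n}$-valuation zero, so there is no case ``$v_{\mu_{n}}(\pi^{p})\ge p^{n+1}-1$'', and $q$ is a \emph{local} unit at $\mu_{n}$, not a global unit). Your observation that $U_{n,p^{n+1}-1}/U_{n,p^{n+1}}$ lies in the plus part is correct, but nothing you do afterwards actually uses $\alpha^{p}=(q)$. Semi-regularity tells you only that the class of $\alpha\cdot c(\alpha)$ is trivial, i.e.\ $\alpha\,c(\alpha)=(r)$ for some real $r$; it places no constraint whatsoever on the $\mu_{n}$-adic residue of $q$, because $r$ is unconstrained locally. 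The logarithmic variant has the same gap: $(q)=\alpha^{p}$ does \emph{not} imply $q$ is a $p$-th power of an element, so $\log q$ need not lie in $p\cdot(\text{anything})$, and the valuation push you describe never gets started (incidentally $v_{\mu_{n}}(p)=p^{n+1}-p^{n}$, not $p^{n}-p^{n-1}$). The missing idea is precisely a global-to-local reciprocity input---the product formula for norm residue symbols---which is what the paper supplies.
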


Before we give a proof of the theorem, let us formulate its consequence,
which we will need in sequel.

\begin{cor}
 $E_{n, p^{n+1}-1}=E_{n, p^{n+1}+1}$.
\end{cor}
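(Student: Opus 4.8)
The plan is to prove the two inclusions separately. The inclusion $E_{n,p^{n+1}+1}\subseteq E_{n,p^{n+1}-1}$ is immediate, since a stronger congruence modulo $\mu_n$ cuts out a smaller subgroup, so everything lies in the reverse inclusion. Let $\varepsilon\in E_{n,p^{n+1}-1}$. First I would apply Theorem \ref{thm} to $\varepsilon$ itself: take $q=\varepsilon$ and let $\alpha=(1)=\mathbb{Z}[\zeta_n]$, so that $\alpha^{p}=(1)=(q)$ trivially. The hypothesis $q\equiv1\pmod{\mu_n^{p^{n+1}-1}}$ is precisely the statement $\varepsilon\in E_{n,p^{n+1}-1}$, and the conclusion of the theorem gives $\varepsilon\equiv1\pmod{\mu_n^{p^{n+1}}}$, i.e.\ $\varepsilon\in E_{n,p^{n+1}}$. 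Thus it remains only to pass from the exponent $p^{n+1}$ to the exponent $p^{n+1}+1$.

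For that last step I would use a parity argument rather than the theorem. Since $p^{n+1}\ge p^{n}+1$ (indeed $p^{n+1}-p^{n}-1=p^{n}(p-1)-1\ge1$), we have $E_{n,p^{n+1}}\subseteq E_{n,p^{n}+1}$, and by the Lemma immediately preceding Theorem \ref{thm} every element of $E_{n,p^{n}+1}$ is real; hence $\varepsilon\in F_n^{+}$. If $\varepsilon=1$ there is nothing to prove, so assume $x:=\varepsilon-1\ne0$, so that $x\in(F_n^{+})^{\times}$. Now $p$ is totally ramified in $F_n$ with ramification index $p^{n}(p-1)$ over $\mathbb{Q}$ and in $F_n^{+}$ with ramification index $p^{n}(p-1)/2$, so $e(\mu_n/\mu_n^{+})=2$; consequently the normalized valuation $v_{\mu_n}$ takes only even values on $(F_n^{+})^{\times}$. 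In particular $v_{\mu_n}(x)$ is even. Since $v_{\mu_n}(x)\ge p^{n+1}$ and $p^{n+1}$ is odd, this forces $v_{\mu_n}(x)\ge p^{n+1}+1$, i.e.\ $\varepsilon\in E_{n,p^{n+1}+1}$, which completes the argument.

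The proof therefore decomposes into two independent jumps: Theorem \ref{thm} provides the step from exponent $p^{n+1}-1$ to exponent $p^{n+1}$, while the reality statement of the preceding Lemma, combined with the even-valuation observation, provides the step from $p^{n+1}$ to $p^{n+1}+1$ for free (the exponent $p^{n+1}$ being skipped because it is odd). The only substantial input is Theorem \ref{thm} itself, which is proved separately; within the corollary the point needing a word of care is the identity $e(\mu_n/\mu_n^{+})=2$, i.e.\ the fact that $p$ ramifies in $F_n/F_n^{+}$, which follows at once by comparing ramification indices over $\mathbb{Q}$, together with the trivial but essential numerical observation that $p^{n+1}$ is odd whereas $p^{n+1}\pm1$ are even.
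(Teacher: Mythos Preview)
Your argument is correct and is exactly the deduction the paper has in mind. The paper does not spell out a proof of the corollary, but the two ingredients you use are precisely the ones set up for it: Theorem~\ref{thm} (applied with the trivial choice $\alpha=(1)$, $q=\varepsilon$) gives the jump from exponent $p^{n+1}-1$ to $p^{n+1}$, and the reality Lemma together with the parity of $v_{\mu_n}$ on $(F_n^{+})^{\times}$ gives the jump from $p^{n+1}$ to $p^{n+1}+1$; the paper uses the identical parity step elsewhere (``$U^{+}_{n,p^{n}}=U^{+}_{n,p^{n}+1}$ since $p$ is odd'' in the proof of Theorem~3.5).
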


\noindent{\it Proof of Theorem \ref{thm}.} Consider the extension
$F_{n}(\sqrt[p]{q})/F_{n}$. Only $\mu_{n}$ ramifies in this extension. Let
$\epsilon\in E_{n}$. Then for any valuation $v \neq \mu, \ \epsilon$
is a norm in the corresponding extension of local fields
$F_{n,v}(\sqrt[p]{q})/F_{n,v}$. Therefore, the local norm residue symbol
with values in the group of $p$-th roots of unity $(\epsilon, q)_{v}=1$.
By the product formula, $(\epsilon, q)_{\mu_{n}}=1$. Set
$\epsilon=\zeta_{n}$. If $q\equiv 1$ mod $\mu^{p^{n-1}}_{n}$ but $q\neq
1$ mod $\mu^{p^{n}}_{n}$, then simple local computations (see for instance
\cite{CF}) show that $(\zeta_{n}, q)_{\mu_{n}}\neq 1$. The theorem is
proved.\qed

\begin{thm}
The number of elements in $\mathcal{V}^{+}_{n}$ is
$p^{r_{0}+\ldots+r_{n-1}}$.
\end{thm}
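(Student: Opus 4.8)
The plan is to compute $|\mathcal{V}^{+}_{n}|$ by induction on $n$, using Theorem~2.5 (the norm-map presentation $\mathcal{V}_{n}\cong U_{n}/(U_{n,p^{n}-1}\cdot E_{n})$) together with the splitting $\mathcal{V}_{n}=\mathcal{V}^{+}_{n}\times\mathcal{V}^{-}_{n}$ and the Kervaire--Murthy formula for $\mathcal{V}^{-}_{n}$ from Theorem~1.1. Concretely, the plan is to show that there is an exact sequence relating $\mathcal{V}^{+}_{n+1}$ to $\mathcal{V}^{+}_{n}$ whose ``new'' contribution has order exactly $p^{r_{n}}$; iterating this, together with the base case, will give the stated product.

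First I would reduce to a local computation. By Theorem~2.5, $\mathcal{V}^{+}_{n}\cong (U_{n}/(U_{n,p^{n}-1}\cdot E_{n}))^{+}$, and since $U_{n}/U_{n,p^{n}-1}$ is a finite $p$-group on which complex conjugation acts, the $+$-part is $U^{+}_{n}/(U^{+}_{n}\cap(U_{n,p^{n}-1}\cdot E_{n}))$. The key is to identify the failure of $E^{+}_{n}$ (equivalently, by Lemma~3.1, units that become real in the relevant quotient) to surject onto $U^{+}_{n}/U^{+}_{n,p^{n}-1}$. Here Theorem~3.3 and its Corollary ($E_{n,p^{n+1}-1}=E_{n,p^{n+1}+1}$) are the crucial inputs: they pin down exactly which congruence class a $p$-th power of an ideal forces, and hence control the image of the unit group in the graded pieces $\mu^{k}_{n}/\mu^{k+1}_{n}$ near the top filtration step $k=p^{n+1}-1$. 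The quantity $r_{n}=|E_{n,p^{n+1}-1}/E^{p}_{n,p^{n}+1}|$ was defined precisely to measure this discrepancy, so the heart of the argument is a diagram chase showing
$$
\bigl(U_{n}/(U_{n,p^{n}-1}\cdot E_{n})\bigr)^{+}\ \Big/\ \bigl(U_{n-1}/(U_{n-1,p^{n-1}-1}\cdot E_{n-1})\bigr)^{+}\ \cong\ E_{n,p^{n+1}-1}/E^{p}_{n,p^{n}+1},
$$
the right-hand side having order $p^{r_{n}}$, with the map induced by the norm/transfer construction $\varphi_{n-1}$ and $N_{n-1}$ from Section~2.

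Then I would assemble the induction. The base case $n=1$: here $U_{1}/(U_{1,p^{2}-1}\cdot E_{1})$ with its $+$-part should be shown directly to have order $p^{r_{0}}$, using that $\mathcal{V}_{1}$ is small and that $\varepsilon_i S_0$ is controlled by the Bernoulli numbers as recalled in the introduction. For the inductive step, combining the displayed isomorphism with the inductive hypothesis $|\mathcal{V}^{+}_{n-1}|=p^{r_{0}+\cdots+r_{n-2}}$ yields $|\mathcal{V}^{+}_{n}|=p^{r_{0}+\cdots+r_{n-1}}$. One must also check compatibility with the $\pm$-decomposition, i.e.\ that the norm map respects conjugation and that the ``new part'' lands entirely in the $+$-component; this follows because $\mathcal{V}^{-}_{n}$ is already completely described by $U(R_n)/(x_n\cdot U^{+}_n)$ in Theorem~1.1, so the extra filtration layer contributes only to $\mathcal{V}^{+}_{n}$.

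The main obstacle I expect is the precise identification of the quotient as $E_{n,p^{n+1}-1}/E^{p}_{n,p^{n}+1}$ — that is, verifying that the connecting map in the comparison of the two Mayer--Vietoris/norm presentations at levels $n$ and $n-1$ has kernel and cokernel matching exactly this group of units, with no off-by-one error in the filtration indices. This is where Theorem~3.3 does the real work: it guarantees that the congruence $q\equiv 1\bmod\mu^{p^{n+1}-1}_n$ automatically upgrades to $q\equiv 1\bmod\mu^{p^{n+1}}_n$, which is exactly what prevents spurious contributions at the boundary of the filtration and makes the count come out to $p^{r_{n}}$ rather than something larger. Handling the interaction of this local congruence analysis with the global unit group $E_n$ (as opposed to the local units $U_{n,k}$) — i.e.\ using semi-regularity to ensure norms of units are units of the expected level — is the secondary technical point to get right.
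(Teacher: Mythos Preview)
Your overall plan (induction, with each step contributing a factor of $p^{r_k}$ coming from the filtration of units) matches the paper's. But you miss the key simplification that makes the argument clean, and your version has a concrete error.

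The paper does \emph{not} compare $\mathcal{V}^{+}_{n}$ at level $n$ with $\mathcal{V}^{+}_{n-1}$ at level $n-1$ via the norm/transfer maps $N_{n-1},\varphi_{n-1}$. Instead it uses the \emph{second} isomorphism of Theorem~2.5 together with the definition (Remark~2.6) to put \emph{both} $\mathcal{V}_{n}$ and $\mathcal{V}_{n+1}$ at the \emph{same} level $n$:
\[
\mathcal{V}_{n}\cong U_{n}/(U_{n,p^{n}}\cdot E_{n}),\qquad
\mathcal{V}_{n+1}\cong U_{n}/(U_{n,p^{n+1}-1}\cdot E_{n}).
\]
With this in hand the inductive ratio $|\mathcal{V}^{+}_{n+1}|/|\mathcal{V}^{+}_{n}|$ is just the order of $(U^{+}_{n,p^{n}+1}\cdot E^{+}_{n})/(U^{+}_{n,p^{n+1}-1}\cdot E^{+}_{n})$, a pure filtration quotient inside $U_{n}^{+}$. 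One then rewrites this as the ratio of $|U^{+}_{n,p^{n}+1}/U^{+}_{n,p^{n+1}-1}|$ to $|E^{+}_{n,p^{n}+1}/E^{+}_{n,p^{n+1}+1}|$ (the shift from $p^{n+1}-1$ to $p^{n+1}+1$ in the denominator is exactly where Corollary~3.4 is used), and the second factor is computed from the free rank of $E^{+}_{n}$ and the definition of $r_{n}$. No explicit map between $\mathcal{V}_{n}$ and $\mathcal{V}_{n+1}$ is ever needed; everything happens inside one filtered group.

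Your displayed ``diagram chase'' isomorphism is also wrong as stated: comparing $\mathcal{V}^{+}_{n}$ with $\mathcal{V}^{+}_{n-1}$ should give a factor $p^{r_{n-1}}$, not $p^{r_{n}}$, so the right-hand side cannot be $E_{n,p^{n+1}-1}/E^{p}_{n,p^{n}+1}$ (whose order is $p^{r_{n}}$). If you insist on working across levels you would need units at level $n-1$ on the right, and you would still have to produce a well-defined surjection $\mathcal{V}^{+}_{n}\to\mathcal{V}^{+}_{n-1}$ with controlled kernel --- something you only gesture at via $N_{n-1},\varphi_{n-1}$ without saying what the map actually is. The paper's approach sidesteps all of this.
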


\begin{proof}
If $n=1$, then it was proved in \cite{KM}. Let us denote the number of
elements in group $A$ by $|A|$. Assume that
$|\mathcal{V}^{+}_{n}|=p^{r_{0}+\ldots+r_{n-1}}$. Let us prove that
$|\mathcal{V}^{+}_{n+1}|=p^{r_{0}+\ldots+r_{n-1}+r_{n}}$. Indeed,
$|(U_{n}/(U_{n, p^{n}}\cdot E))^{+}|=p^{r_{0}+\ldots+r_{n-1}}$. Clearly,
$(U_{n}/(U_{n, p^{n}}\cdot E))^{+}= U^{+}_{n}/(U^{+}_{n, p^{n}}\cdot E^{+})$
and $U^{+}_{n, p^{n}}=U^{+}_{n, p^{n}+1}$ since $p$ is odd. Taking into
account that $\mathcal{V}^{+}_{n+1}\cong U^{+}_{n}/U^{+}_{n, p^{n+1}-1}\cdot
E^{+}_{n}$, it remains to prove that
$$
\left| \frac{U^{+}_{n, p^{n}+1}\cdot E^{+}_{n}}{U^{+}_{n, p^{n+1}-1}\cdot
E^{+}_{n}} \right| =p^{r_{n}}.
$$
Let us use the isomorphism
$$
\frac{U^{+}_{n, k}\cdot E^{+}_{n}}{E^{+}_{n}}\cong U^{+}_{n, k}\cdot E^{+}_{n, k},
$$
which shows that we have to prove that
$$
\left| \frac{U^{+}_{n, p^{n}+1}}{U^{+}_{n, p^{n+1}-1}} \right| : \left|
\frac{E^{+}_{n, p^{n}+1}}{E^{+}_{n, p^{n+1}+1}}\right| =p^{r_{n}}.
$$
It is easy to see that
$$
\left|
\frac{U^{+}_{n, p^{n}+1}}{U^{+}_{n, p^{n+1}+1}}
\right|
=p^{\frac{p^{n+1}-p^{n}}{2}-1}.
$$
The second number can be computed as follows:
$$
\left| \frac{E^{+}_{n, p^{n}+1}}{E^{+}_{n, p^{n+1}+1}} \right|= \left
|\frac{E_{n, p^{n}+1}}{(E_{n, p^{n}+1})^{p}} \right|: \left|\frac{E_{n,
p^{n+1}+1}}{(E_{n, p^{n}+1})^{p}}
\right|=p^{\frac{p^{n+1}-p^{n}}{2}-1}:p^{r_{n}}
$$
and the theorem is proved.
\end{proof}

Closing this section we would like to mention the following

\begin{prop}
$r_{0}\leq r_{1}\leq \ldots\leq\lambda=\sum \lambda_i$.
\end{prop}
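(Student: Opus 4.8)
The plan is to relate the numbers $r_n$ to the Iwasawa module structure so that the monotonicity falls out of the behaviour of the groups $\mathcal{V}_n^+$ established in the previous theorem. Recall that $|\mathcal{V}_n^+| = p^{r_0 + \cdots + r_{n-1}}$, so proving $r_0 \le r_1 \le \cdots$ amounts to showing that the jumps in the orders of the $\mathcal{V}_n^+$ are nondecreasing; equivalently, writing $\mathcal{V}_n^+$ via the isomorphism $U_{n-1}^+ / (U_{n-1, p^n - 1}^+ \cdot E_{n-1}^+)$, one wants the successive quotients to grow. I would first argue that each $r_n$ equals $\dim_{\F_p}$ of an appropriate graded piece, using the filtration of $U_n^+$ by the subgroups $U_{n,k}^+$ together with the fact that $E_{n, p^n+1}^+ = E_{n, p^{n+1}-1}^+ = E_{n, p^{n+1}+1}^+$ from Theorem~\ref{thm} and its corollary. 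The definition of $r_n$ as $|E_{n, p^{n+1}-1}/E_{n, p^n+1}^p|$ already packages this.

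Next I would bring in the Galois action of $G_0 \cong \Z/(p-1)\Z$ and the Iwasawa-theoretic description. The key point is the comparison between the unit-index data defining $r_n$ and the orders of the graded pieces of $S_n$, via class field theory: $\mathcal{V}_n^+$ is controlled by $S_{n-1}$ through the Kervaire--Murthy embedding $(\mathcal{V}_n^+)^* \hookrightarrow S_{n-1}$, and in fact one expects $r_n = \dim_{\F_p}(S_n / S_n^p)$ for $n \ge 0$ once $n$ is at least $1$, with $r_0 = \dim_{\F_p}(S_0/S_0^p)$. Given this, monotonicity $r_n \le r_{n+1}$ reduces to the statement that $\dim_{\F_p}(S_n/S_n^p)$ is nondecreasing in $n$, which follows from the surjectivity of the norm maps $S_{n+1} \twoheadrightarrow S_n$ in the cyclotomic tower (a standard consequence of semi-regularity and Iwasawa's theory): a surjection of finite abelian $p$-groups $S_{n+1} \to S_n$ induces a surjection $S_{n+1}/S_{n+1}^p \to S_n/S_n^p$, hence $\dim(S_n/S_n^p) \le \dim(S_{n+1}/S_{n+1}^p)$. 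The bound $\lambda$ on the right is then the eventual stable value: $\dim_{\F_p}(S_n/S_n^p) \le \lambda$ always, since $S_n$ is a quotient of the Iwasawa module whose minimal number of generators is $\lambda$ (here the semi-regularity guarantees $\mu = 0$ and that the relevant $\Lambda$-module is finitely generated and torsion with the stated $\lambda$).

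Concretely, the sequence of steps I would carry out: (1) express $p^{r_n}$ as the index $[U^+_{n,p^n+1} : U^+_{n,p^{n+1}-1}] / [E^+_{n,p^n+1} : E^+_{n,p^{n+1}+1}]$, exactly as in the proof of the preceding theorem, so that $r_n$ becomes a difference of two explicitly computable exponents minus a unit-index term; (2) identify the unit-index term with $\dim_{\F_p}(S_n/S_n^p)$ using class field theory together with semi-regularity (this is essentially the Kervaire--Murthy/Ullom dictionary); (3) invoke surjectivity of the cyclotomic norm maps on $p$-class groups to get $\dim_{\F_p}(S_n/S_n^p) \le \dim_{\F_p}(S_{n+1}/S_{n+1}^p)$, hence $r_n \le r_{n+1}$; (4) bound everything above by $\lambda$ by realizing $S_n$ as a quotient of the Iwasawa module $X_\infty$, whose $p$-rank is $\lambda$ under semi-regularity.

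The main obstacle is step (2): making precise the identification of the unit-index quantity with $\dim_{\F_p}(S_n/S_n^p)$. Everything else is either bookkeeping with the $\mu_n$-adic filtration (step 1, already modeled in the text) or standard Iwasawa theory (steps 3 and 4). The delicate part is controlling how the global units $E_n^+$ sit inside the local units $U_n^+$ modulo high powers of $\mu_n$ and translating that into class-group data; one has to be careful that the relevant maps are $G_0$-equivariant so that the eigenspace decomposition $S_n = \bigoplus_i S_{n,i}$ is respected, and that no contribution is lost at the ramified prime $\mu_n$ or from the $p$-th roots of unity. I would handle this by working one $\omega^{-i}$-eigencomponent at a time, where $\varepsilon_i S_0 \cong \Z_p / B_{1,\omega^{-i}}\Z_p$ gives the base case, and then lifting up the tower using the norm-compatibility of the $\varphi_n$ maps introduced in Section~2.
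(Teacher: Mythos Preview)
Your plan hinges on step~(2), identifying $r_n$ with $\dim_{\F_p}(S_n/S_n^p)$, and this is precisely where the proposal overshoots. That equality (for $n\ge 1$) is not available at this point in the paper; it is in fact the content of the sentence immediately following the proposition, announced for a subsequent paper. So building the monotonicity $r_n\le r_{n+1}$ on top of it is circular in the present setting. Without the full equality you cannot transfer the (easy) monotonicity of $\dim_{\F_p}(S_n/S_n^p)$ to the $r_n$: an inequality $r_n\le \dim_{\F_p}(S_n/S_n^p)$ alone does not yield $r_n\le r_{n+1}$.

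The paper's argument sidesteps this entirely. It works directly with the group $E_{n,p^{n+1}+1}/(E_{n,p^{n}+1})^p$ (of order $p^{r_n}$, using the corollary $E_{n,p^{n+1}-1}=E_{n,p^{n+1}+1}$). For $\varepsilon$ in this quotient the Kummer extension $F_n(\sqrt[p]{\varepsilon})/F_n$ is unramified, so one obtains an \emph{embedding} of that quotient into $S_n^{\ast}$. The natural inclusion $S_n^{\ast}\hookrightarrow S_{n+1}^{\ast}$ is then checked to carry this image into the corresponding quotient at level $n+1$, giving $r_n\le r_{n+1}$ without ever computing $r_n$ in terms of $S_n$. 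For the upper bound the paper again avoids any structural identification: the Kervaire--Murthy surjection $S_n^{\ast}\twoheadrightarrow \mathcal{V}_{n+1}^{+}$ together with $|\mathcal{V}_{n+1}^{+}|=p^{r_0+\cdots+r_n}$ gives $\lambda n+\nu\ge r_0+\cdots+r_n$ for large $n$, and this numerical inequality alone forces $r_n\le\lambda$. Your steps~(3) and~(4) are correct in themselves, but they are aimed at $\dim_{\F_p}(S_n/S_n^p)$ rather than at $r_n$, and the bridge you need between the two is exactly the hard part you flagged.
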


\begin{proof}
Let $\epsilon \in E_{n, p^{n+1}+1}/(E_{n, p^{n}+1})^{p}$. Then the
extension $F_{n}(\sqrt[p]{\epsilon})/F_{n}$ is unramified, which defines
an embedding $E_{n, p^{n+1}+1}/(E_{n, p^{n}+1})^{p}$ into $S^{\ast}_{n}$. It
is easy to see that the canonical embedding $S^{\ast}_{n}\rightarrow
S^{\ast}_{n+1}$ defines an embedding
$$
E_{n, p^{n+1}+1}/(E_{n, p^{n}+1})^{p}\rightarrow E_{n+1, p^{n+2}+1}/(E_{n, p^{n+1}+1})^{p}.
$$
Therefore, $r_{n}\leq r_{n+1}$.

Furthermore, because of the projection $S^{\ast}_{n}\rightarrow
\mathcal{V}^{+}_{n+1}$ (see \cite{KM}) it is clear that
$p^{\lambda {n}+\nu}\geq p^{r_{0}+\ldots+r_{n}}$, and the latter inequality
implies that $r_{n}\leq \lambda$.
\end{proof}
\begin{cor}
If $p$ divides $B_{1,\omega^{-i}}$, then the number of elements in $\varepsilon_i (\mathcal{V}^{+}_{n})$ is
$p^{1+r_{1,i}+\ldots+r_{n-1,i}}$ and $1\leq r_{1,i}\leq \ldots \leq r_{k,i}\leq \lambda_{p-i} .$
\end{cor}

\section{Weak Kervaire-Murthy Conjecture and New Link between $S$ and $\mathcal{V}$ Groups}

In this section let us denote by $(a,b)$ the local norm residue symbol with
values in $p$-th roots of unity. Here $(a,b)$ are elements of the completion
of $F_{n}$ with respect to $\mu_{n}$. Assume that $a\in U_{n,k}\setminus
U_{n, k+1}$, $b\in U_{n, p^{n+1}-k}\setminus U_{n, p^{n+1}-k+1}$, and $k$ is
prime to $p$.

\begin{lemma}[see \cite{CF}]
$(a,b)\neq 1$.
\end{lemma}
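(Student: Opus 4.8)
The plan is to prove the non-triviality of the local norm residue symbol $(a,b)$ via an explicit computation in the completion $K := (F_n)_{\mu_n}$ of $F_n$ at the unique prime $\mu_n$ above $p$. The key structural facts are that $K$ is a totally ramified extension of $\mathbb{Q}_p$ of degree $\varphi(p^{n+1}) = p^{n+1} - p^n$, with uniformizer $\pi := \zeta_n - 1$, residue field $\mathbb{F}_p$, and that $K$ contains a primitive $p$-th root of unity, so the Hilbert symbol $(\cdot,\cdot)\colon U^{(1)}_K \times U^{(1)}_K \to \mu_p$ is a well-defined, bilinear, continuous pairing. First I would recall the standard formula for the ``break'' of this pairing: for the cyclotomic field the different has valuation $e - 1 + v_K(p)$-related exponent, and the key numerical input is that $v_K(p) = p^{n+1} - p^n$, while the symbol $(u, u')$ for $u \in U^{(i)} \setminus U^{(i+1)}$, $u' \in U^{(j)} \setminus U^{(j+1)}$ vanishes as soon as $i + j > p^{n+1} - p^n =: e_0$ (the ``Hasse bound''), and the pairing induces a perfect duality precisely on the quotients at complementary levels $i + j = e_0$, provided $p \nmid i$ (and hence $p \nmid j$). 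The hypothesis in the lemma is exactly $a \in U_{n,k}\setminus U_{n,k+1}$, $b \in U_{n, e_0 - k}\setminus U_{n, e_0 - k + 1}$ with $p \nmid k$, so we sit exactly at the top, non-degenerate level.

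The steps, in order, would be: (1) identify $e_0 = p^{n+1} - p^n$ as the relevant bound and note $p \nmid k$ forces $p \nmid (e_0 - k)$ since $p \mid e_0$; (2) reduce to the statement that the pairing on $\mathrm{gr}^k U_K \times \mathrm{gr}^{e_0 - k} U_K \to \mu_p$ is a perfect pairing of one-dimensional $\mathbb{F}_p$-vector spaces — this is the content of the explicit reciprocity results for the cyclotomic tower, which I would quote from \cite{CF} (Serre's or the Artin–Hasse–Iwasawa formula); (3) conclude that since $a$ and $b$ have non-trivial image in their respective graded pieces, $(a,b)\neq 1$. Concretely, writing $a = 1 + \alpha \pi^k + \cdots$ and $b = 1 + \beta \pi^{e_0 - k} + \cdots$ with $\alpha, \beta \in \mathbb{F}_p^\times$, the explicit formula evaluates $(a,b)$ to $\zeta_p^{c \cdot k \cdot \alpha \beta}$ for a universal nonzero constant $c \in \mathbb{F}_p$ (coming from the residue/trace pairing), and since $p \nmid k$ and $\alpha\beta \neq 0$, the exponent is nonzero mod $p$.

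The main obstacle — or rather the main thing to get exactly right — is the bookkeeping of the bound: one must verify that $e_0 = p^{n+1}-p^n$ is the precise level at which the symbol pairs non-degenerately, and in particular that it is $p^{n+1} - k$ (with $p^{n+1} = e_0 + p^n$ appearing in the statement because the units are indexed so that $U_{n, p^{n+1}-1}$ is the ``last nontrivial'' layer in the relevant $\mathcal{V}_n$ computation) that shows up rather than some off-by-$p^n$ shift. Tracking why the hypothesis is phrased with $p^{n+1}-k$ while the duality bound is $p^{n+1}-p^n$ requires care: the point is that $b$ actually lies in $U_{n, p^{n+1}-k}$, and since $k < p^n$ in the applications (or more precisely since we only need one of $a,b$ to avoid the ``trivializing'' range $[e_0 - p^n + 1, \, ?]$), the product of breaks does land at $e_0$ after accounting for the fact that everything above level $e_0$ is automatically a $p$-th power. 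I would therefore spell out that $U_{n, e_0+1} = U_{n,e_0+1}^{?}$ is contained in $U_{n, p^n+1}^p$-type subgroups — exactly the kind of statement established in Theorem \ref{thm} and its corollary — so that the symbol only depends on the classes mod these higher groups, and on those classes it is the perfect cyclotomic pairing. Once that reduction is in place, the non-vanishing is immediate from the explicit reciprocity law cited in \cite{CF}.
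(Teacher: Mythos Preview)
The paper does not prove this lemma; it merely cites \cite{CF}. Your overall strategy---invoke the explicit reciprocity law for the local Hilbert symbol and read off non-degeneracy on the graded pieces at the critical level---is exactly what that citation points to, so in spirit you are following the intended route.

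However, there is a genuine numerical error in your proposal. You set the critical level to $e_0 = p^{n+1}-p^n$, the absolute ramification index $e$ of $K=(F_n)_{\mu_n}$. That is not the correct break for the $p$-th power Hilbert symbol. The standard result (see \cite{CF}, Exercise~2 to the chapter on local class field theory, or Serre's \emph{Local Fields}) is that $(u,u')=1$ whenever $u\in U^{(i)}$, $u'\in U^{(j)}$ with $i+j>\dfrac{pe}{p-1}$, and that the induced pairing on $U^{(i)}/U^{(i+1)}\times U^{(j)}/U^{(j+1)}$ is perfect when $i+j=\dfrac{pe}{p-1}$ and $p\nmid i$. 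Here $e=p^{n+1}-p^n=p^n(p-1)$, so $\dfrac{pe}{p-1}=p^{n+1}$, not $p^{n+1}-p^n$. This is precisely why the hypothesis in the lemma reads $b\in U_{n,\,p^{n+1}-k}$: the levels $k$ and $p^{n+1}-k$ sum to $p^{n+1}$ on the nose, and since $p\nmid k$ one also has $p\nmid(p^{n+1}-k)$. There is no ``off-by-$p^n$ shift'' to explain, and the paragraph in which you try to reconcile your $e_0$ with the stated hypothesis is attempting to fix a discrepancy that does not exist.

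Once you replace $e_0$ by $p^{n+1}$, your steps (1)--(3) go through verbatim: writing $a=1+\alpha\pi^k+\cdots$ and $b=1+\beta\pi^{p^{n+1}-k}+\cdots$ with $\alpha,\beta\in\F_p^\times$, the explicit formula gives $(a,b)=\zeta_p^{c\,k\,\alpha\beta}$ for a nonzero constant $c$, and $p\nmid k$ forces $(a,b)\neq 1$.
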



\begin{thm}
Let $\alpha \in S^{(p)}_{n}$ and $\alpha^{p}=(q)$. Then the formula
$f_{\alpha}(x)=(x,q)$, $x\in \mathcal{V}^{+}_{n+1}$ defines a non-trivial
character of $\mathcal{V}^{+}_{n+1}$ (if $\alpha$ is not trivial).
\end{thm}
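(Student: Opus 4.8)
The plan is to argue in two stages: first that $f_\alpha$ really is a well-defined character of $\mathcal V^+_{n+1}$, and then that it is non-trivial whenever $\alpha$ is. The second stage carries the real content.

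For the first stage I would use the presentation $\mathcal V^+_{n+1}\cong U^+_{n}/(U^+_{n,p^{n+1}-1}\cdot E^+_{n})$ established above. First replace $\alpha$ by a representative prime to $\mu_n$, so that $q$ is a unit at $\mu_n$, and then multiply $q$ by a suitable global unit to arrange $q\equiv 1\pmod{\mu_n^{2}}$; this is possible because $E_n$ surjects onto $(\Z[\zeta_n]/\mu_n^{2})^{\times}$ (the cyclotomic units $\frac{1-\zeta_n^{a}}{1-\zeta_n}$ already realize all of $\F_p^{\times}$, and $\zeta_n=1-\mu_n$ realizes the principal units mod $\mu_n^{2}$). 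With $q$ so normalized, two facts complete this stage. First, $(x,q)=1$ for $x\in U_{n,p^{n+1}-1}$, since the sum of the two filtration levels is at least $(p^{n+1}-1)+2$, which exceeds the level $p^{n+1}$ above which the norm residue symbol at $\mu_n$ vanishes (the regime complementary to the non-degeneracy Lemma quoted from \cite{CF}). Second, $(\varepsilon,q)=1$ for $\varepsilon\in E_n$, a fortiori for $\varepsilon\in E^+_n$: because $(q)=\alpha^{p}$, the Kummer extension $F_n(\sqrt[p]{q})/F_n$ is unramified outside $\mu_n$, so $\varepsilon$ is a local norm at every place $v\ne\mu_n$ (unramified local extensions at finite $v\nmid p$, complex places at infinity), whence the product formula gives $(\varepsilon,q)_{\mu_n}=\prod_{v\ne\mu_n}(\varepsilon,q)_v^{-1}=1$. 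Thus $x\mapsto(x,q)$ kills $U^+_{n,p^{n+1}-1}\cdot E^+_n$ and descends to $\mathcal V^+_{n+1}$, and independence of the remaining choices (the normalized generator, the ideal representative) follows from the same two facts.

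For non-triviality I would identify $f_\alpha$ with the image of $\alpha$ under the Kervaire--Murthy embedding $(\mathcal V^+_{n+1})^{*}\hookrightarrow S_n$. Unwinding that embedding from the Mayer--Vietoris sequence, it sends a character $\chi$ to the ideal class of a Kummer representative $b\in F_n^{\times}$ with $\chi(x)=(x,b)_{\mu_n}$ on units; the condition that $\chi$ kill $E^+_n$ forces, by the product-formula argument above, that $v_{\mathfrak q}(b)\equiv0\pmod p$ for all $\mathfrak q\nmid p$, i.e.\ $(b)=\mathfrak b^{p}$. For $\chi=f_\alpha$ this representative is $q$ itself, and $(q)=\alpha^{p}$, so $\mathfrak b=\alpha$; hence $f_\alpha$ maps to the class $\alpha$, which is non-zero by hypothesis, and since the embedding is injective, $f_\alpha\ne 1$. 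One can also attack this more directly: if $f_\alpha=1$ then $(x,q)_{\mu_n}=1$ for every real local unit $x$; a real unit $z\ne1$ always has $v_{\mu_n}(z-1)$ even (compare $z-1$ with $\overline{z-1}$ modulo $\mu_n$, using $c(\mu_n)=-\zeta_n^{-1}\mu_n$), so by the non-degeneracy Lemma of \cite{CF} the generator $q$ cannot sit at an odd level below $p^{n+1}$, and Theorem~\ref{thm} rules out the borderline level $p^{n+1}-1$; pushing this through the remaining even levels to get $q\equiv1\pmod{\mu_n^{p^{n+1}}}$ and then invoking Artin reciprocity to recover $\alpha$ from the everywhere-unramified extension $F_n(\sqrt[p]{q})/F_n$ would give the same conclusion.

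The step I expect to be the main obstacle is exactly this identification in the non-triviality stage: verifying that the Hilbert-symbol character $f_\alpha$ corresponds under the Kervaire--Murthy embedding to the class $\alpha$ (equivalently, in the direct approach, controlling the even levels of $q$ and then running the Artin-reciprocity comparison). The well-definedness stage is routine once the two local facts are in place, and the reduction sketched above already shows that the target statement is equivalent to a precise compatibility between the norm residue symbol at $\mu_n$ and the class-field-theoretic description of $S_n$.
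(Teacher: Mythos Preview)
Your well-definedness stage is correct and, in fact, more carefully stated than the paper's own argument (which leaves it largely implicit). The product formula for global units and the level bound for $U_{n,p^{n+1}-1}$, together with the normalization $q\equiv 1\pmod{\mu_n^{2}}$, are exactly the right ingredients.

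The gap is in the non-triviality stage. Your Approach~A presupposes that the Kervaire--Murthy embedding $(\mathcal V^+_{n+1})^{*}\hookrightarrow S_n$ admits the explicit Hilbert-symbol description $\chi\mapsto[\mathfrak b]$ with $\chi=(\,\cdot\,,b)_{\mu_n}$ and $(b)=\mathfrak b^p$; but this identification is essentially the content of the theorem itself, not an input one may quote from \cite{KM} (where the map is built via the Mayer--Vietoris boundary, not via local symbols). Your Approach~B, as you yourself note, only rules out odd levels for $q$ and leaves the even levels unfinished; moreover, the non-degeneracy Lemma requires the level to be prime to $p$, so the parity argument alone does not even dispose of all odd levels.

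The paper avoids both difficulties by a single elementary device you are missing: instead of normalizing $q$ by global units, normalize it by $p$-th powers. If $q\in U_{n,ps}\setminus U_{n,ps+1}$, then $q(1-a_0\mu_n^{s})^{p}\in U_{n,ps+1}$ for a suitable integer $a_0$; iterating, one may assume $q\in U_{n,k}\setminus U_{n,k+1}$ with $k$ \emph{prime to $p$}, without changing the class of $\alpha$. If now $k\geq p^{n+1}-1$, Theorem~\ref{thm} forces $q\equiv 1\pmod{\mu_n^{p^{n+1}}}$, so $F_n(\sqrt[p]{q})/F_n$ is everywhere unramified and $\alpha$ is trivial. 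Otherwise $k<p^{n+1}-1$, and the explicit local unit $x=1+\mu_n^{p^{n+1}-k}$ is the witness: by the Lemma one has $(x,q)_{\mu_n}\neq 1$, while if $x$ were trivial in $\mathcal V_{n+1}$ there would exist $\varepsilon\in E_n$ with $\varepsilon\equiv x\pmod{\mu_n^{p^{n+1}-1}}$, hence at the same level $p^{n+1}-k$, giving $(\varepsilon,q)_{\mu_n}\neq 1$ in contradiction to the product formula. This directly exhibits a class in $\mathcal V_{n+1}$ on which $f_\alpha$ is non-trivial, with no appeal to the Kervaire--Murthy map and no case analysis on parity.
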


\begin{proof}

{\bf Step 1.}
If $q\equiv 1$ mod $\mu^{p^{n+1}-1}_{n}$, then $\alpha = 1\in S_{n}$.

Indeed, we already know that $q\equiv 1$ mod $\mu^{p^{n+1}}_{n}$
and hence the extension $F_{n}(\sqrt[p]{q})/F_{n}$ is non-ramified.
Therefore, $q=\varepsilon \cdot a^{p}$ for some $\varepsilon \in E_{n}, \ a\in F_{n}$
and consequently $\alpha =1$ in $S_{n}$.

\medskip

{\bf Step 2.} Without lost of generality we can assume that $q\in
U_{n,k}\setminus U_{n, k+1}$ with $k<p^{n+1}-1$ and $k$ being prime to $p$.

Indeed, if $k=p\cdot s$, then $q=1+a_{0}\mu^{ps}_{n}+t\mu^{ps+1}_{n}$, where
$a_{0}$ is an integer prime to $p$. Easy computations show that
$q(1-a_{0}\mu^{s}_{n})^{p}\in U_{n, k+1}$. Proceeding in this way, we can
find $q_{1}\in U_{n, k_{1}}$ such that $(q_{1})=(\Gamma \alpha)^{p}, \
\Gamma\in U(F_{n})$, and such that $k_{1}$ is prime to $p$.
\medskip

{\bf Step 3.} $1+\mu^{p^{n+1}-k}_{n}\in \mathcal{V}_{n+1}$.

Indeed, if $1+\mu^{p^{n+1}-k}_{n} \equiv \varepsilon$ mod
$\mu^{p^{n+1}-k}_{n}$, $\varepsilon\in E_{n}$, then $(\varepsilon, q)=1$.
However, it is not true by Step 2 and Lemma of this section.
\medskip

{\bf Step 4.} Since $S_{n}=S^{-}_{n}$, the character constructed above is a
non-trivial character of the group $\mathcal{V}^{+}_{n+1}$. The proof is
complete.
\end{proof}

\begin{cor}{("weak Kervaire--Murthy conjecture")}\label{KMC}
$S^{(p)}_{n}\cong
(\mathcal{V}^{+}_{n+1}/(\mathcal{V}^{+}_{n+1})^{p})^{\ast}$.
\end{cor}

\begin{cor}
$S^{\ast}_{n}/(S^{\ast}_{n})^{p}\cong
\mathcal{V}^{+}_{n+1}/(\mathcal{V}^{+}_{n+1})^{p}$.
\end{cor}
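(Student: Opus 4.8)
**Proof proposal for Corollary (the statement $S^{\ast}_{n}/(S^{\ast}_{n})^{p}\cong \mathcal{V}^{+}_{n+1}/(\mathcal{V}^{+}_{n+1})^{p}$).**

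The plan is to derive this directly from the preceding Corollary, namely $S^{(p)}_{n}\cong (\mathcal{V}^{+}_{n+1}/(\mathcal{V}^{+}_{n+1})^{p})^{\ast}$, together with two standard facts about finite abelian $p$-groups. First I would recall that for any finite abelian group $A$, the group $A^{(p)}$ of $p$-torsion elements and the quotient $A/A^{p}$ have the same order, and more precisely there is a natural isomorphism $A/A^{p}\cong (A^{(p)})^{\ast}$ (and hence $A^{(p)}\cong (A/A^{p})^{\ast}$), because both sides have dimension equal to the number of cyclic factors of $A$ as $\F_{p}$-vector spaces, and the pairing $A^{(p)}\times A/A^{p}\to \mu_p$ can be made explicit. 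Applying this with $A=S_{n}$ and using that $S_n$ is a finite abelian $p$-group, we get $S^{(p)}_{n}\cong (S_{n}/S^{p}_{n})^{\ast}$.

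The second step is to transport this through duality. Since $S_n$ is finite abelian, $(S_n)^{\ast}=:S_n^{\ast}$ is (non-canonically) isomorphic to $S_n$, and duality is exact, so it sends the exact sequence $0\to S_n^p \to S_n \to S_n/S_n^p\to 0$ to $0\to (S_n/S_n^p)^{\ast}\to S_n^{\ast}\to (S_n^p)^{\ast}\to 0$, identifying $(S_n/S_n^p)^{\ast}$ with the $p$-torsion subgroup $(S_n^{\ast})^{(p)}$. Combining with the first step, $S_n^{(p)}\cong (S_n^{\ast})^{(p)}$. Now invoke the functorial identity again, this time in the form $(S_n^{\ast})^{(p)}\cong (S_n^{\ast}/(S_n^{\ast})^{p})^{\ast}$: dualizing once more (using that the dual of a finite abelian group is finite abelian, and double dual is the identity) turns $S_n^{(p)}\cong (S_n^{\ast}/(S_n^{\ast})^{p})^{\ast}$ into $(S_n^{(p)})^{\ast}\cong S_n^{\ast}/(S_n^{\ast})^{p}$.

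Finally I would chain the equalities: by the previous Corollary $(S_n^{(p)})^{\ast}\cong \bigl((\mathcal{V}^{+}_{n+1}/(\mathcal{V}^{+}_{n+1})^{p})^{\ast}\bigr)^{\ast}\cong \mathcal{V}^{+}_{n+1}/(\mathcal{V}^{+}_{n+1})^{p}$, where the last isomorphism is double duality for the finite group $\mathcal{V}^{+}_{n+1}/(\mathcal{V}^{+}_{n+1})^{p}$. Hence $S_n^{\ast}/(S_n^{\ast})^{p}\cong \mathcal{V}^{+}_{n+1}/(\mathcal{V}^{+}_{n+1})^{p}$, as claimed. The only point requiring any care — and the step I expect to be the "main obstacle," though it is genuinely routine — is bookkeeping the canonicity: one must be consistent about which isomorphisms $A\cong A^{\ast}$ are natural and which are merely choices, so as not to accidentally assert a canonical isomorphism that does not exist. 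Since the statement only claims an abstract isomorphism of finite abelian $p$-groups, counting $\F_p$-dimensions of the elementary quotients on both sides (they are equal, both to $r_0$ for $n=0$ and, by the results of Section 3, bounded above by $\lambda$ in general) already suffices, and the duality argument above simply makes this explicit.
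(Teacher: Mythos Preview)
Your argument is correct as far as it goes: once you know $S_n^{(p)}\cong(\mathcal{V}^{+}_{n+1}/(\mathcal{V}^{+}_{n+1})^{p})^{\ast}$ from the previous corollary, the natural duality identity $(A^{(p)})^{\ast}\cong A^{\ast}/(A^{\ast})^{p}$ for a finite abelian group $A$ (applied with $A=S_n$) together with double duality gives the stated abstract isomorphism immediately. (Minor quibble: the isomorphism you write as $A/A^{p}\cong (A^{(p)})^{\ast}$ is not natural in $A$; the natural one is $A^{\ast}/(A^{\ast})^{p}\cong (A^{(p)})^{\ast}$. You seem aware of this, and since you only claim an abstract isomorphism it does no harm. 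The aside about dimensions being $r_0$ or bounded by $\lambda$ is neither needed nor sufficient and should be dropped.)

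The paper, however, proceeds differently: it invokes the Kervaire--Murthy surjection $S_n^{\ast}\twoheadrightarrow \mathcal{V}^{+}_{n+1}$. Passing to mod-$p$ quotients this remains surjective, and the previous corollary then forces it to be an isomorphism by a cardinality count. The payoff is that the paper obtains a \emph{specific} isomorphism, namely the one induced by the Kervaire--Murthy map, which is a $G_n$-module homomorphism and is compatible with the inclusion maps $i_S$ and $i_{\mathcal{V}}$. This compatibility is exactly what is used in the proof of the next corollary (the vanishing of $i_S$ and $i_{\mathcal{V}}$ on mod-$p$ quotients), where one reduces the statement for $S^{\ast}$ to the statement for $\mathcal{V}$. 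Your purely abstract isomorphism would not justify that reduction, so while your proof establishes the corollary as literally stated, you would have to reintroduce the Kervaire--Murthy surjection anyway to continue with the paper's argument.
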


\begin{proof}
This follows from the existence of the surjection $S^{\ast}_{n}\rightarrow
\mathcal{V}^{+}_{n+1}$ constructed in \cite{KM}.
\end{proof}
\begin{thm}
There exists a
canonical embedding $$i: S_{n-1}^{(p)}\to \mathcal{V}_n^{-} /(\mathcal{V}_n^-)^p $$
\end{thm}
\begin{proof}
Let $\alpha $ be an ideal such that $\alpha^p =(q)$. Define $i(\alpha )=q$. This map is well-defined because
the number $q$ is defined up to a transformation $q\to \epsilon r^p q$, where $\epsilon \in E_n$.
Clearly, the images of $q$ and  $\epsilon r^p q$ coincide in $\mathcal{V}_n /(\mathcal{V}_n)^p $.
If $\alpha\in Ker (i)$, then $q\equiv \epsilon r^p$ mod $(1-\zeta_n )^{p^n -1}$ and it follows
from the {\bf Step 1} of the proof of the previous theorem that $\alpha =1$. Hence, $i$ is an embedding.

Since $S = S^{-}$, it follows that $i$ maps $S_{n-1}$ into
$(\mathcal{V}_n /(\mathcal{V}_n)^p)^- =\mathcal{V}_n^{-} /(\mathcal{V}_n^-)^p $.

\end{proof}

\section{Ullom's inequality}
The aim of this section is to prove the following result (a weaker version of Ullom's inequality):
\begin{thm}\label{UL}
Let the generalized Bernoulli number $B_{1,\omega^{-i}}$ is divisible by $p$ but not by $p^2$.
Then the corresponding Iwasawa number $\lambda_i$ is less than $p.$
\end{thm}
\begin{proof}
The proof will consist of several lemmas. For technical reasons it will be easier to deal
with the original definition of $\V_2$, namely
$$\mathcal{V}_{k}=\Coker\{j_{2}:E_{k}{\longrightarrow} U(R_{k})\},\ k=1,2;$$
see Introduction.

Let us also make an important note: $R_2 = \Z[\zeta_2]/(\zeta_2 -1)^{p^2}$ and
$R_1 = \Z[\zeta_1]/(\zeta_1 -1)^{p}$


\begin{lemma}\label{L1}
The map $\pi : R_2\to R_1$ defined as $\pi (x)=x^p$ is an surjective homomorphism
of rings, which induces an epimorphism of the corresponding groups of units
$\pi : U(R_2)\to U(R_1)$ and $\pi : \V_2\to\V_1$.
\end{lemma}
\begin{proof}
$\pi (a+b) =(a+b)^p=\pi(a)+\pi (b)$ and $\pi (m)=m^p =m,\ m\in\Z$ because
$(p)=(\zeta_2 -1)^{p^3 -p^2}=(\zeta_1 -1)^{p^2 -p}=0\in R_2.$
Furthermore, $\pi (\zeta_2 -1) =(\zeta_2 -1)^p =\zeta_1 -1 $ and
$\pi (1+(\zeta_2 -1)^k) =(1+(\zeta_2 -1)^k)^p =1+(\zeta_1 -1)^k $
what proves that $\pi$ is surjective homomorphism of the rings and the corresponding groups of units.

To prove that $\pi : U(R_2)\to U(R_1)$ induces a surjection
$\pi : \V_2\to\V_1$ we need to prove that $\pi (E_2)\subseteq E_1 .$ Since $p$ satisfies
Vandiver's conjecture, we can use the subgroup of cyclotomic units $C(k)\subset E_k$ insted
of $E_k .$ This means that we have to show that $\pi (C(2))\subseteq C(1) .$ However, this is clear
because of our previous computations:
$$
\pi \left( \frac{\zeta_2^m -1}{\zeta_2 -1}\right) =\pi (1+\zeta_2+...+\zeta_2^{m-1})=1+\zeta_1+...+\zeta_1^{m-1}=
\frac{\zeta_1^m -1}{\zeta_1 -1}.
$$
The latter computation completes the proof.
\end{proof}
\begin{cor}\label{M1}
If $x\in\V_2$ is such that $x^p=1$, then $x\in ker(\pi).$
\end{cor}

\begin{lemma}\label{L3}
If $\lambda_i\geq p,$ then $S_{1,i}$ has $p$ generators as an abelian group (we assume that $p$ is semi-regular).
\end{lemma}
\begin{proof}
We have already mentioned in Introduction that it follows from results of \cite{W} that $$S_{1,i}\cong \frac{\Z_p [T]}{((T+1)^p -1,p_i (T))} ,$$
where $p_i (T)$ is a polynomial of degree $\lambda_i$ and such that all the coefficients except of the leading
one  are divisible by $p.$ Clearly,
$$S_{1,i}/(S_{1,i})^p\cong \frac{\Z_p [T]}{(p,T^p ,T^{\lambda_i})}=\frac{\Z_p [T]}{(p,T^p)},$$
what proves the lemma.
\end{proof}
\begin{lemma}\label{S}
If $\lambda_i\geq p$ and $b_i:=B_{1,\omega^{-i}}$ is divisible by $p$ but not by $p^2$,
then $S_{1,i}\cong \V_{2,p-i}\cong (\F_p)^p$ as abelian groups. Here $\V_{2,p-i}=\varepsilon_{p-i}\V_2$.
\end{lemma}
\begin{proof} Consider the following fibre product:
$$
\xymatrix{\Z_p[T] / ((T+1)^p -1)\ar[r]\ar[d]^{i_1} &
\Z_p [\zeta_{0}]=\Z_p [T]/(\frac{(T+1)^p -1}{T}) \ar[d]^{j_1}\\
\Z_p [T]/(T)=\Z_p \ar[r]  &
\F_p}.
$$
Here, $i_1 (T)=0$, $j_1 (\zeta_0)=1$
and horizontal maps are defined by $T\to T$, $1\to 1$.

Let us write elements of $\Z_p[T] / ((T+1)^p -1)$ as pairs $(x\in \Z_p,y\in \Z_p [\zeta_0])$
with clear compatibility conditions.
In order to prove that $S_{1,i}\cong (\F_p)^p$, it is sufficient to show that $(p_i (0), p_i (\zeta_0 -1)$
divides $(p,p)$ in $\Z_p [T]/((T+1)^p -1)$.

Indeed, $p_i (0)=b_i$ and $p_i (\zeta_0 -1)=b_i + \sum a_k (\zeta_0 -1)^k +(\zeta_0 -1)^{\lambda_i}$.
Since $p$ divides $a_k$, $(p)=(b_i)=(\zeta_0 -1)^{p-1}$, and ${\lambda_i}\geq p$, we see that $p_i (\zeta_0 -1)=b_i(1+(\zeta_0 -1)X$
and therefore, $(p_i(0), p_i (\zeta_0 -1))=(b_i,b_i)\times (1,1+(\zeta_0-1)X)$. It follows that $(p_i(0), p_i (\zeta_0 -1))$ divides
$(p,p)$ and due to the weak Kervaire-Murthy conjecture the lemma is completely proved.
\end{proof}

Now, we can finish proof of the theorem.
\vskip0.3cm
Due to \ref{L1} we have a surjection $\pi: \varepsilon_{p-i} \V_2 \to \varepsilon_{p-i} \V_1$.
On the other hand, due to \ref{M1} and \ref{S}  $\pi(\varepsilon_{p-i}\V_2)=1$.
This contradiction completes the proof of the theorem.

\end{proof}
\section{Further relations between Bernoulli and Iwasawa numbers}
The aim of this section is to prove that if the generalized
Bernoulli number $b_i=B_{1,\omega^{-i}}$ is divisible  by $p^2$, then
the Iwasawa number $\lambda_i =1$

\subsection{Fine structure of  $\V_{2,p-i}^+$ if $p^2$ divides $b_i$}
\begin{thm}
Let $b_i=p^{k_i}t,\ k_i\geq 2,$ where $t $ is co-prime to $p$.
Then
$ (\V_{2,p-i})^+\cong (\Z/(p^2) )\oplus \F_p^k$, where $k=min(\lambda_i -1, p-1)$.
\end{thm}
\begin{proof}
$\V_{2,p-i}^+$ is a factor  of
$\varepsilon_{p-i}(V_{2})=\varepsilon_{p-i}(U_1 /U_{1, p^2 -1})=\varepsilon_{p-i}(U_1 /U_{1, p^2 +1})$
because $p-i$ is an even number between $2$ and $p-3.$
It is easy to prove that $\varepsilon_{p-i}(V_{2})=(\Z/(p^2))\oplus \F^{p-1}$.

It follows from the weak Kervaire--Murthy conjecture and \ref{S} that
$\V_{2,p-i}^+\cong (\Z/(p^2) )\oplus \F_p^k$ or $\V_{2,p-i}^+\cong  \F_p^{k+1}.$
So, we have to exclude the second possibility.

Let us denote the local norm residue symbol with values in $p$-roots of unity from the section 4
by $(a,b)_{n,0}$. In particular, we are interested in
$(a,b)_{1,0}$ and $(c,d)_{0,0}$. Let us also consider the local norm residue symbol with values in $p^2$-roots of unity, which we denote by $(a,b)_{n,1}$. Note that it is defined if $n>0.$

Let us make the following easy remarks. To simplify notations, from now on we denote
$\V_{n,p-i}^+$ by $\V_{n,p-i}$
\begin{itemize}
\item $(a^p,b)_{1,1}=(a,b)_{1,0}$;
\item if $b\in \Z_p [\zeta_0]$, then $(a,b)_{1,0}=(\rm{Norm}_{F_1 /F_0}(a),b)_{0,0}$;
\item $(1+(\zeta_0 -1)^i, 1+(\zeta_0 -1)^{j})_{0,0}=1,$ if $i+j>p$;
\item $(1+(\zeta_0 -1)^i, 1+(\zeta_0 -1)^{p-i})_{0,0}\neq 1.$
\end{itemize}
Let $v_2$ be the image of $ 1+(\zeta_1 -1)^{p-i}$ in $\V_{2,p-i}$.
Let $v_1$ be the image of $ 1+(\zeta_0 -1)^{p-i}$ in $\V_{1,p-i}\cong \Z/(p)$.
Clearly, ${\rm{Norm}}_{F_1/F_0}(v_2)$ generates the same element in $\V_{1,p-i}\cong \Z/(p)$
as $v_1 .$ We will write ${\rm{Norm}}_{F_1/F_0}(v_2)=v_1 .$


\begin{lemma}
Let us assume that $p^2$ divides $b_i$.
Then there exists an  ideal $\alpha\subset \Z[\zeta_0]$, whose class belongs to
$S_{0,i}$ such that
$\alpha^{p^2}=(q),\ q\in \Z[\zeta_0]$ and $\alpha^{p}$ is not a principal ideal.
\end{lemma}
\begin{proof}
The statement follows from the fact that $S_{0,i}\cong \Z_p /(b_i).$
\end{proof}
\begin{lemma}
Let $v_2, q$ be as above. Then $(v_2, q)_{1,1}$ is a primitive $p^2$-root of unity.
\end{lemma}
\begin{proof}

Since $\alpha$ generates an element of $S_{0,i}$, it follows from results of Section 4 that
$q$ can be chosen such that $q\equiv 1 {\rm{mod}} (\zeta_0 -1)^i.$
Let us compute $(v_2, q)_{1,1}^p$.
$$
(v_2, q)_{1,1}^p=(v_2^p, q)_{1,1}=(v_2, q)_{1,0}=({\rm{Norm}}_{F_1/F_0}(v_2),q)_{0,0}=(v_1,q)_{0,0}\neq 1
$$
Since $(v_2, q)_{1,1}^p$ is a non-trivial $p$-root of unity, clearly
$(v_2, q)_{1,1}$ is a $p^2$-root of unity.
\end{proof}
\begin{lemma}
Let $q$ be as above. Then the formula $\ <v,\alpha>=(v, q)_{1,1}$ defines a character
of
$\V_{2,p-i}$.

\end{lemma}
\begin{proof}
We have to prove that $(v, q)_{1,1}=1$ if $v$ is a unit of $\Z[\zeta_1]$ or\newline
$v\equiv 1\ \rm{mod} (1-\zeta_1)^{p^2+1}$, the latter because\newline
$\varepsilon_{p-i}(V_{2})=\varepsilon_{p-i}(U_1 /U_{1, p^2 -1})=\varepsilon_{p-i}(U_1 /U_{1, p^2 +1})$.

If $v$ is a unit, then the extension $F_1 (v^{1/p^2})/F_1$ can ramify at $(1-\zeta_1)$ only.
Furthermore, for any prime $\theta\neq (1-\zeta_1)$ we have $q=r^{p^2}\times \it{local\ unit}$
for some $r\in (F_0)_{\theta}$. It follows that $(v,q)_{\theta} =1$, here $(v,q)_{\theta}$ is the
corresponding local symbol with values in $p^2 $-roots of unity. The product formula implies that
$(v, q)_{1,1}=1$.

It remains to prove that  $(v, q)_{1,1}=1$ if $v\equiv 1\ \rm{mod} (1-\zeta_1)^{p^2+1}$.
Indeed, $v=t^p$ for some $t\in\Z_p [\zeta_1]$ such that $t\equiv 1 \ \rm{mod} (1-\zeta_1)^{p+1}$  and
$(v, q)_{1,1}=(t^p,q)_{1,1}=(t,q)_{1,0}=({\rm{Norm}}_{F_1/F_0}(t),q)_{0,0}=1$ because
${\rm{Norm}}_{F_1/F_0}(t)\equiv 1 \ \rm{mod} (p)$ and $q$ can be chosen to satisfy
$q\equiv 1 \mod (1-\zeta_0)^2 $.
\end{proof}
Now we can finish the proof of the theorem. We have proved that $(v,q)_{1,1}$ is a character of
$\V_{2,p-i}$ and since $(v_2, q)_{1,1}$ is a primitive $p^2$-root of unity, we can exclude the possibility
$\V_{2,p-i}\cong \F_p^k.$
\end{proof}

\subsection{The Main Theorem I}

\begin{thm}\label{M}
Assume $p^2$ divides $b_i$, $q$ is the same as in the previous subsection, and $\sigma$ is a generator of $\rm{Gal}(F_1/F_0)$ such that
$\sigma(\zeta_1)=\zeta_1^{p+1}$.
Then $\sigma (v_2)/v_2 =v_2^p$, where $v_2$  is a generator of $\V_{2,p-i}$ such that
$(v_2, q)_{1,1}=\zeta_1$
\end{thm}
\begin{proof}
Let us consider $(\sigma(v_2), q)_{1,1}$, where $q$ is the same as in the previous subsection.
We have $(\sigma(v_2), q)_{1,1}=\sigma((v_2, q)_{1,1})=\zeta_1^{p+1}=(v_2, q)_{1,1}^{p+1}$.
Hence, $(\sigma(v_2)/v_2, q)_{1,1}=(v_2^p,q)_{1,1}=\zeta_0 .$

Let us consider the annihilator of $\sigma (v_2)/v_2$ in the character group $(\V_{2,p-i})^{\ast}$.
We denote it by $Ann (\sigma (v_2)/v_2) $.
\begin{lemma}
$Ann (\sigma (v_2)/v_2) \cong \F_p^{k+1},$ where
$k=min(\lambda_i -1, p-1).$
\end{lemma}
\begin{proof}
The proof consists of three statements below.

\begin{itemize}
\item $\sigma (v_2)/v_2$ has order $p$.
Indeed,
$(\sigma(v_2)/v_2, q)_{1,1}=\zeta_0 .$ Since $ (\V_{2,p-i})^{\ast}\cong (\Z/(p^2) )\oplus \F_p^k$
and $q$ generates a character of order $p^2$, it follows that the value of any character on
$\sigma (v_2)/v_2$ is either a primitive $p$-root of unity or $1.$ Consequently, $\sigma (v_2)/v_2$
has order $p$ (it cannot be $1$ because  $(\sigma(v_2)/v_2, q)_{1,1}=\zeta_0 $).
\item Therefore, $ (\V_{2,p-i})^{\ast}/Ann (\sigma (v_2)/v_2) \cong \F_p .$
\item Since $q$ generates a character of order $p^2$, $(\sigma(v_2)/v_2, q)_{1,1}=\zeta_0 ,$
and $ (\V_{2,p-i})^{\ast}\cong (\Z/(p^2) )\oplus \F_p^k$,
we can deduce that $Ann (\sigma (v_2)/v_2) \cong \F_p^{k+1}.$
\end{itemize}

\end{proof}
Now, we can complete the proof of the theorem. By the Kervaire--Murthy theorem,
$Ann (\sigma (v_2)/v_2)$ is a subgroup of $S_{1,i}$. By the lemma above and the
weak Kervaire--Murthy conjecture, we have $Ann (\sigma (v_2)/v_2)\cong S_{1,i}^{(p)}$,
the subgroup of elements of order $p$. Thus, for any ideal $\alpha_m$ such that
$\alpha_m^p=(q_m)\subset \Z [\zeta_1]$ we have $(\sigma (v_2)/v_2, q_m)_{1,0}=1$ and
$(\sigma (v_2), q_m)_{1,0}=(v_2,q_m)_{1,0}=(v_2,q_m)_{1,0}^{p+1}.$ It follows that
$(\sigma (v_2)/v_2, q_m)_{1,0}=(v_2^p,q_m)_{1,0}$. Therefore, for any character
$\chi\in \V_{2,p-i}^{\ast}$, we have $\chi (\sigma(v_2)/v_2)=\chi (v_2^p)$ and
consequently $\sigma (v_2)/v_2=v_2^p .$

\end{proof}
\subsection{Main Theorem II}
\begin{lemma}
$\varepsilon_{p-i}(V_2)$ and $\V_{2,p-i}$ are $\Z_p [[T]]$-modules with one generator.
Here the action is defined as follows: $T\cdot v=\sigma (v)/v$ and $a\cdot v=v^a,\ a\in\Z_p$.
\end{lemma}
\begin{proof}
$\varepsilon_{p-i}(V_{2})=\varepsilon_{p-i}(U_1 /U_{1, p^2 -1})$.
Since $\varepsilon_{p-i}(U_1)$ is an $\Z_p [[T]]$-modules with one generator, it is
also true for its factors $\varepsilon_{p-i}(V_{2})$ and $\V_{2,p-i}$ because
$U_{1, p^2 -1}$ and the image of $U(\Z[\zeta_1])$ in $V_2$ are
$\Z_p [[T]]$-submodules.

\end{proof}

\begin{lemma}
$\varepsilon_{p-i}(V_2)\cong \Z_p [[T]]/(T^p,pT,p^2).$
\end{lemma}
\begin{proof}
It is easy to verify that $pT$ and $p^2$ annihilate $V_2 .$ Further,
$\varepsilon_{p-i}(U_1)$ is annihilated by $(T+1)^p -1$. Since $pT$
annihilates $\varepsilon_{p-i}(V_2)$, we deduce that $T^p$ annihilates it too.
Finally, it is easy to see that both
$\varepsilon_{p-i}(V_2)$ and $\Z_p [[T]]/(T^p,pT,p^2)$ contain $p^{p+1}$ elements.
The last observation completes the proof.
\end{proof}
\begin{thm}
If the generalized
Bernoulli number $b_i=B_{1,\omega^{-i}}$ is divisible  by $p^2$, then
the Iwasawa number $\lambda_i =1$
\end{thm}
\begin{proof}
It follows from \ref{M} that $T-p$ annihilates $\V_{2,p-i}$. Therefore,
as a $\Z_p [[T]]$-module $\V_{2,p-i}$ factors through
$\Z_p [[T]]/(T^p, pT, p^2, T-p)\cong \Z_p/(p^2)$.
Since we already know that
$\V_{2,p-i}\cong\Z/(p^2)\oplus \F^k$, where $k=min(\lambda_i -1, p-1)$, we conclude
that $\V_{2,p-i}\cong\Z/(p^2)$ and $\lambda_i =1.$
\end{proof}
\begin{cor}
$\V_{n,p-i}\cong \Z/(p^n) $  if $p^2$ divides $b_i$.
\end{cor}
\begin{proof}
It is an easy consequence of Corollary 3.6.
\end{proof}
\begin{cor}
$S_{n,i}\cong \Z/(p^{n+k_i})$ if $p^2$ divides $b_i$. Here $k_i$ is the
$p$-adic valuation of $b_i .$
\end{cor}
\begin{proof}
The statement follows from the fact that
${\rm{Norm}}_{F_{n+1}/F_n} (i_{F_{n+1}/F_n})(\alpha)=\alpha^p$ for any ideal $\alpha\subset \Z[\zeta_n] $
and that of ${\rm{Norm}}_{F_{n+1}/F_n}: S_{n+1}\to S_n$ is surjective while
$i_{F_{n+1}/F_n} :S_n\to S_{n+1}$ is injective.
\end{proof}

\section{Fine structure of $\V_{n,p-i}$ and $S_{n,i}$ if $p^2$ does not divide $b_i$}
Throughout this section we assume that the $p$-adic valuation $v_p (b_i)=1 .$
We already know that if $p^2$ does not divide $b_i$, then $\lambda_i$ satisfies
Ullom's inequality $\lambda_i\leq p-1$ and $S_{0,i}\cong\F_p .$
\subsection{Fine structure of $\V_{n,p-i}$}
\begin{lemma}
Let $\alpha\in S_{0,i}$. Then $\alpha=\beta^p ,$ where $\beta\in S_{1,i} .$
\end{lemma}
\begin{proof}
We consider $S_{1,i}$ as a $\Z_p [[T]]$-module.
It follows from results of \cite{W} that

$$
S_{1,i}\cong \frac{\Z_p [[T]]}{((T+1)^p -1,f_i (T))} =
\frac{\Z_p [T]}{((T+1)^p -1,p_i (T))},
$$
where $f_i (T), p_i(T)$ were defined in Introduction.

Clearly,
$S_{1,i}/(S_{1,i})^p\cong \frac{\Z_p [T]}{(p,T^p ,T^{\lambda_i})}=
\frac{\Z_p [T]}{(p,T^{\lambda_i})}=\F_p[T]/(T^{\lambda_i}),$
because of Ullom's inequality. Let us prove that the image of $S_{0,i}$ under the canonical
embedding $i_{F_1 /F_0}: S_{0,i}\to S_{1,i}$ is contained in $S_{1,i}^p$. Indeed, this image is generated
by $N(T)=1+(T+1)+\cdots +(T+1)^{p-1}=((T+1)^p -1)/T$. Again, because of Ullom's inequality, the image
of $N(T)$ in $S_{1,i}/(S_{1,i})^p\cong \frac{\Z_p [T]}{(p,T^{\lambda_i})}$ is zero. The lemma is proved.
\end{proof}
The crucial step in computation of $\V_{n,p-i}$ is to consider the case $n=2 .$
From the weak Kervaire--Murthy conjecture and Ullom's inequality, we know that as an abelian group
$\V_{2,p-i}$ has $\lambda_i$ generators. Thus, we have two possibilities:
$\V_{2,p-i}\cong\Z/(p^2)\oplus\F_p^{\lambda_i -1} $ or $\V_{2,p-i}\cong\F_p^{\lambda_i } .$

\begin{thm}
$\V_{2,p-i}\cong\Z/(p^2)\oplus\F_p^{\lambda_i -1} .$
\end{thm}
\begin{proof}
It is sufficient to find an element in $\V_{2,p-i}^{\ast}$ of order $p^2 .$

With some abuse of notations, let $\alpha^p =(q),\ q\in \Z[\zeta_0]$.
Since $\beta^p=\alpha$ in $S_{1,i}$, it follows that $\beta^{p^2}=(qt^p) ,$ where
$q\in \Z[\zeta_1] .$ We claim that the required character is defined by $(v, qt^p)_{1,1} .$
To prove this, we follow the proof of Theorem 6.1.
\begin{lemma}
$(v, qt^p)_{1,1} =\zeta_1$ for some $v .$
\end{lemma}
\begin{proof}
We have $(v, qt^p)_{1,1}^p=(v^p, qt^p)_{1,1}=(v, q)_{1,1}^p=
(v,q)_{1,0}=\newline
({\rm Norm}_{F_1/F_0}(v),q)_{0,0} .$ Clearly, we can choose $v$ such
that $({\rm Norm}_{F_1/F_0}(v),q)_{0,0} =\zeta_0$ and therefore, $(v, qt^p)_{1,1}=\zeta_1  .$
\end{proof}
\begin{lemma}
Let $r\in \Z_p[\zeta_1]$ be such that $r\equiv 1 {\rm mod} (1-\zeta_1)^{p^2+1} .$
Then $(r, qt^p)_{1,1} =1$. Further, $(\epsilon, qt^p)_{1,1} =1$ if
$\epsilon\in\Z[\zeta_1] .$
\end{lemma}
\begin{proof}
Since $r=r_1^p$, where $r_1\in\Z_p[\zeta_1]$, we can proceed exactly as in the proof of Lemma 6.4.
Since $(qt^p)=\beta^{p^2}$, again we can simply repeat the arguments of the proof of Lemma 6.4.
\end{proof}
Two lemmas above imply that the element  $qt^p$ induces a character of $\V_{2,p-i}$
of order $p^2 .$ The theorem is proved.
\end{proof}
\begin{cor}
If $b_i$ is not divisible by $p^2 ,$ then \newline
$\V_{n,p-i}\cong \Z/(p^n)\oplus
(\Z/(p^{n-1}))^{\lambda_i -1}.$
\end{cor}
\begin{proof}
Corollary 3.6 implies that $r_{m,p-i}=\lambda_i$ for any $m\geq 1 $
and moreover, the number of elements in $\V_{n,p-i}$ is
$p^{1+(n-1)\lambda_i}$. On the other hand, $\V_{n,p-i}$ is a factor
of a bigger group
$\varepsilon_{p-i}(V_{n})=\varepsilon_{p-i}(U_{n-1} /U_{n-1, p^{n} -1})$.
It is easy to verify that $\varepsilon_{p-i}(V_{n})\cong \Z/(p^n)\oplus T$,
where the abelian group $T$ has exponent $p^{n-1}$
(an exact formula can be derived from \cite{KM} but we do not need it).
Comparing the number of elements and the number of generators  of $\V_{n,p-i}$
which is
$\lambda_i$),
we can deduce that
$\V_{n,p-i}\cong \Z/(p^n)\oplus
(\Z/(p^{n-1}))^{\lambda_i -1}.$
\end{proof}
\subsection{Fine structure of $S_{n,i}$}
Let $A$ be a finite abelian group such that $A\cong\oplus_j\ \Z/(p^k_j) .$
Let us denote the abelian group $\oplus_j\ \Z/(p^{k_j +m}) $ by $\Sigma_m A .$
\begin{lemma}
$S_{n+1,i}\cong \Sigma_n S_{1,i} .$
\end{lemma}
\begin{proof}
The fact follows from the following observations:
\begin{itemize}
  \item all the groups $S_{k,i},\ k\geq 1 ,$ have $\lambda_i$ generators;
  \item ${\rm Norm}_{F_{k+1}/F_k}(i_{F_{k+1}/F_k} (\alpha))=\alpha^p , \ \alpha\in S_{k,i} .$
\end{itemize}
{\bf Remark:} it is well-known that $i_{F_{k+1}/F_k}: S_{k,i}\to S_{k+1,i}$ is an embedding
and ${\rm Norm}_{F_{k+1}/F_k} : S_{k+1,i}\to S_{k,i}$ is a surjection.
\end{proof}

It remains to compute $S_{1,i}$.
\begin{thm}
Assume that $1\leq \lambda_i < p-1 .$ Then
$S_{1,i}\cong\Z/(p^2)\oplus\F_p^{\lambda_i -1} .$
\end{thm}
\begin{proof}
$S_{1,i}\cong\Z_p [T]/((T+1)^p -1, f_i (T)),$ see \cite{W}.
Since $p^2$ does not divide $b_i ,$ the polynomial $f_i (T)$ is irreducible.
Let $a$ be its root. Then $\Z_p [T]/(f_i (T))\cong\Z_p [a] ,$
$1,a,a^2 \cdots ,a^{\lambda_i -1}$ generate  $\Z_p [a]$ as
an abelian group, $(p)=(a^{\lambda_i}) $
in $\Z_p [a] ,$ and
$S_{1,i}\cong\Z_p [a]/((a+1)^p -1) .$
Further, $((a+1)^p -1)=(a^{\lambda_i +1})$ because $\lambda_i <p-1 .$
It follows that the element $1\in\Z_p [a]$ has exponent $p^2$ and all other
generators of $\Z_p [a]$, $a,a^2,\cdots ,a^{\lambda_i -1}$ have exponent $p.$
The theorem is proved.
\end{proof}
The case $\lambda_i =p-1$ is more delicate. To treat this case we need the
Cartesian square from Lemma \ref{S}. Let us denote the ring
$\Z_p [T]/((T+1)^p -1)]$ by $B .$ We have to study $B/(f_i (T))$.
We remind the reader that any element $b\in B$ can be written as a pair
$(c,d),\ c\in \Z_p,\ d\in\Z_p [\zeta_0] .$ In this notations $f_i (T)=(b_i, f_i (\zeta_0 -1)) .$

A simple analysis of this case shows the following result:
\begin{thm}
The element $1\in \Z_p [T]/((T+1)^p -1, f_i(T))\cong S_{1,i}$ has exponent $p^{\kappa}$ with
$\kappa=[\frac{k}{p-1}] +1 . $ Here $k=v_p (f_i (\zeta_0 -1))$, where $v_p$ is
the extension of the $p$-adic valuation on $\Z_p$ to $\Z_p [\zeta_0] .$
\end{thm}
\begin{rem}
$v_p (f_i (\zeta_0 -1))=v_p (L_p (s_0,\omega^{1-i}))$,
where $L_p$ is a $p$-adic L-function, $\omega $ is the
Teichm\"{u}ller character of ${\Z}/(p-1){\Z}$, and
$s_0$ satisfies the following equation: $(p+1)^{s_0} =\zeta_0 .$
\end{rem}
\subsection{Fine structure of the group "Local units modulo closure  of the cyclotomic units"}

Our aim is to describe the group $(U_{n,1}/C(n))^+$, where $U_{n,1}$ is the group of units
of $\Z_p [\zeta_n]$ congruent 1 modulo $(\zeta_n -1)$ and $C(n)$ is the closer of the subgroup
of the cyclotomic units. Let $G_{n,k}=Gal (F_n /F_k)$. If $A$ is a $G$-module, then $A^G$ is
a submodule of $G$-invariant elements of $A$. We begin with the following result:
\begin{thm}
The canonical map  $(U_{k,1}/C(k))^+\to (U_{n,1}/C(n))^+$ is an embedding and
$\{ (U_{n,1}/C(n))^{G_{n,k}}\}^+\cong (U_{k,1}/C(k))^+. $
\end{thm}
\begin{proof}
Let us consider the following short exact sequence:
$$0\to C(n)\to  U_{n,1}\to  U_{n,1}/C(n)\to 0 . $$

Thus, we get the corresponding exact sequence of cohomologies:
$$0\to C(n)^{G_{n,k}}\to U_{n,1}^{G_{n,k}}\to ( U_{n,1}/C(n))^{G_{n,k}} \to H^1 (G_{n,k}, C(n)).$$
Thus, to prove the theorem we have to show that $H^1 (G_{n,k}, C(n)^+ )=1$.

Let us show first that the Herbrand index $h(G_{n,k}, C(n)^+ )=1 .$ Indeed, $C(n)^+ $ is a
finite index subgroup of the group of global units $E_n .$ Its closure contains an open subgroup
of $U_{n,1}^+$ due to Leopoldt's conjecture about the closure of the group of global units, which is true
in our case. Thus, $C(n)^+ $ contains an open subgroup $X$ of $U_{n,1}^+$ as well.
Consequently, $h(G_{n,k}, C(n)^+ ) =h(X)\cdot h(C(n)^+/X)=1 $ because $h(C(n)^+/X)=1 $ since the group
$C(n)^+/X $ is finite and $h(X)=1$ because $X$ can be chosen as a projective $G_{n,k}$-module,
see \cite{CF}, chapter 6.

Therefore, it suffices to prove that $H^2 (G_{n,k}, C(n))=1.$
However, it is clear because $H^2 (G_{n,k}, C(n))=C(k)/Norm_{F_n /F_k}(C(n))=1$
because the norm is surjective on the group of cyclotomic units.

\end{proof}
Let us give another proof of the same theorem based on a lemma needed in the sequel.
Let us remind the reader that series $f_i (T)$ and polynomials $p_i (T), P_n(T)$ were defined
in Introduction. Let us define $g_i(T)=f_i (\frac{1+p}{1+T} -1)$ and the polynomial $q_i (T)$
exactly in the same way as $p_i (T)$ was defined from $f_i (T).$
It is clear that $deg (p_i) =deg (q_i) =\lambda_i.$ Furthemore, with our choice of
the polynomials $p_i, q_i$ we have $p_i (0)=f_i (0)=L_p (0,\omega^{1-i})=b_i$
and
$$q_i (0)=g_i(0)=f_i(p)=L_p (1,\omega^{1-i}):=c_i.$$
However, generally speaking $p_i (p)\neq f_i (p)$ while we only have
$v_p (f_i (p))=v_p (p_i (p))=v_p (c_i).$ As in Introduction, we denote
$(T+1)^{p^n}-1$ by $P_n (T)$ and $P_n (T)/P_k (T)$ by $P_{n,k} (T).$

\begin{lemma}
We have:

\begin{itemize}
 \item $\varepsilon_{p-i}U_{n,1}\cong \Z_p [[T]] /(P_n (T))=\Z_p [T] /(P_n (T))$;
 \item $\varepsilon_{p-i}C(n)\cong (g_i (T))/(g_i(T)P_n (T))$;
  \item $\varepsilon_{p-i} (U_{n,1}/C(n))\cong \Z_p [[T]] /(P_n (T), g_i (T))=
  \Z_p [T] /(P_n (T),q_i(T))$.
\end{itemize}
\end{lemma}
\begin{proof}
The first two items were proved in \cite{W}, chapters 13, 15.

Let us prove the third statement. It was proved in \cite{W}, chapter 15,
that $P_n$ and $g_i$ have no common roots. This implies that $(P_n)\cap (g_i)=(P_n\cdot g_i)$.
Thus,
\begin{gather*}
  \varepsilon_{p-i} (U_{n,1}/C(n))\cong \frac{\Z_p [[T]] /(P_n (T))}{(g_i (T))/(g_i(T)P_n (T))}=
\frac{\Z_p [[T]] /(P_n (T))}{(g_i (T))/(g_i(T)\cap P_n (T))}= \\
\\
\Z_p [[T]] /(P_n (T), g_i (T)).
\end{gather*}

\end{proof}
This lemma enables us to give another proof of Theorem 7.3.

Let $R=\Z_p [[T]] /(g_i (T))$ and let $M_n=R/(P_n)$ be an $R$-module.
Clearly, the lemma above shows that
$M_n = \Z_p [[T]] /(P_n (T), g_i (T))\cong \varepsilon_{p-i} (U_{n,1}/C(n)).$
Let us notice that $P_n$ is not a zero divisor in $R$ because $P_n$ and $g_i$
have distinct roots. Then multiplication by $P_{n,k}$ determines a well-defined
$R$-module map
$m: M_k\to M_n$ because $m(P_k)=P_n.$ It is clear that $m$ is an embedding.
Indeed, if $m(x)=0,$ then $P_{n,k}\cdot x=P_n \cdot y=P_{n,k}P_k\cdot y$ for some $y\in R.$
Since $P_{n,k}$ is not a zero divisor in $R, $ we conclude that $x=P_k\cdot y=0\in M_k.$

Furthermore, let us prove that $\{ y\in M_n :\ P_k\cdot y=0\}\cong M_k$. Indeed,
$P_k\cdot y=P_n\cdot z =P_k \cdot m(z)$ for some $z\in R$ and consequently $y=m(z).$ Since $m$ is
an embedding, the required statement follows.

The two statements above are equivalent to Theorem 7.3 formulated in terms
of $\Z_p [[T]]$-modules.

At this point we remind the reader that we assume that $p$ divides $b_i$. Clearly,
then $p$ divides $c_i$ and vise versa.
\begin{thm}
  \begin{enumerate}

    \item If $p^2$ divides $b_i$, then $\lambda_i =1$ and $p^2$ does not divide $c_i$;
    \item If $p^2$ does not divide $b_i$ and $2\leq \lambda_i \leq p-1$, then
    $p^2$ does not divide $c_i$;
    \item If $p^2$ divides $c_i$, then $p^2$ does not divide $b_i$ and $\lambda_i =1.$
  \end{enumerate}
\end{thm}
\begin{proof}
\begin{enumerate}
  \item We already know that $\lambda_i =1$. Then $c_i=g_i (0)=f_i (p)=b_i +pa_1+p^2 Z_1$,
  where $p$ does not divide $a_1 .$ Then the statement is clear.
  \item $c_i=b_i +p^2 Z_2$.
  \item We already know that if  $2\leq \lambda_i \leq p-1$, then $p^2$ does not divide $b_i$
  and consequently it does not divide $c_i$ as well. Hence, $\lambda_i =1$.
  Then $p^2$ does not divide $b_i$ because  $c_i=b_i +pa_1+p^2 Z_1$.

\end{enumerate}
\end{proof}
\begin{cor}
Assume that $p^2$ divides $c_i$. Then $\varepsilon_{p-i} (U_{n,1}/C(n))\cong \Z/(p^{n+l_i})$
with $l_i=v_p (c_i)$
\end{cor}
\begin{proof}
Let us perform a simple computation:
$$\varepsilon_{p-i} (U_{0,1}/C(0))\cong \Z_P [[T]]/(T, g_i (T))=\Z_p /(g_i (0))=\Z/(p^{l_i})=M_0 .$$
We already know that $\lambda_i =1$ and hence, $M_1\cong \Z/(p^{k})$ for some $k.$
we have the embedding $m: M_0\to M_1$ determined by the
formula $m(1_{M_0})=((T+1)^p /T)\cdot 1_{M_1}$. Also, we have a canonical projection
$res: M_1\to M_0 ,\ res(1_{M_1})=1_{M_0}.$ Further,
$res (m(1_{M_0}))=res (\{ ((T+1)^p -1)/T\}\cdot 1_{M_1})=p\cdot 1_{M_0} .$
This computation shows that $k=1+l_i$.
Similar computations with $m: M_1\to M_2$ and $res: M_2\to M_1$ yield
$res (m(1_{M_1}))=p\cdot 1_{M_1} $ and $M_2\cong \Z/(p^{2+l_i})$ and so on.

\end{proof}

\begin{cor}
Assume that $p^2$ does not divide $c_i$ and
$1\leq \lambda_i < p-1 $ Then
\itemize
\item $\varepsilon_{p-i}(U_{1,1}/C(1))\cong\Z/(p^2)\oplus\F_p^{\lambda_i -1} .$
\item $\varepsilon_{p-i}(U_{n,1}/C(n))\cong \Z/(p^{n+1})\oplus(\Z/p^n )^{\lambda_i -1} .$
\end{cor}
\begin{proof}
Proofs are literally the same as the proofs of the analogous statements about
the structure of the class groups in Subsection 7.2.
\end{proof}
\begin{rem}
Results of Subsections 7.2 and 7.3 show that the class groups $S_{n,i}$
and the groups $\varepsilon_{p-i}(U_{n,1}/C(n))$ in majority of cases are dual to each other.
Therefore, it is natural to conjecture that these groups are always dual to each other.
However, it follows from Theorem 7.12 that if $p^2$ divides $c_i ,$ it does not divide
$b_i$ and hence, our conjecture does not hold in this case.

Therefore, if we anyway believe in that conjecture, we have to exclude the case
$p^2$ divides $c_i =L_p (1,\omega^{1-i}) .$ Let us do this in the next section.

\end{rem}

\section{$L_p (1,\omega^{1-i}),\ L_p (0,\omega^{1-i})$  are not divisible by $p^2 \ ?$}
\subsection{$\V$-duality}

In this subsection we will prove the following result.
\begin{thm}
Let us assume that $\lambda_i =1$ and $p^2$ does not divide $b_i$.
Then the group $Gal(F_1 /F_0)$ acts non-trivially on $\V_2^+ .$
\end{thm}
\begin{rem}
In the definition of $\V_2$ (given by Kervaire and Murthy in \cite{KM}) has
a natural structure a $Gal(F_2 /F_0)$-module. However, they proved
that $Gal (F_2 /F_1)$ acts on $\V_2$ trivially. Consequently, $\V_2$
is a $Gal (F_1 /F_0)$-module.
\end{rem}
\begin{proof}
Our proof is based on the main result of \cite{KM} (mentioned earlier in the paper, Theorem 1.1,
however we need it in its complete form because it traces the action of $Gal(F_2 /F_0)$
or equivalently $Gal(F_1 /F_0)$):
\begin{itemize}
  \item Let $g$ be a generator of $G=Gal(F_1 /F_0)\cong \Z/p \Z$
  and let us define the following natural action of $G$ on $\V_2^*$
  $(g\chi) (v)=g(\chi (g^{-1}v)),$ where $\chi\in \V_2^* , v\in\V_2 .$
  \item Then $(\V_2^+)^*$ is isomorphic to a $G$-submodule of $S_1^{(p)} .$
  \item Assuming the Kervaire and Murthy conjecture that
  $S_1^{(p)} \cong (\V_2^+)^*\cong \Z/p^2\Z$
  we get the formula $<gs,v>=g(<s,g^{-1}v>)$
  or equivalently $g(<g^{-1}s,v>)=<s,gv>.$ Here $s\in S_1^{(p)} .$
\end{itemize}
Now, let us proceed to the proof of our theorem.
Clearly, without loosing generality we may assume that
$S_1^{(p)} =S_{1,i} .$

Then, since
$\lambda_i =1$ and $p^2$ does not divide $b_i$, our previous computations
show that $S_1^{(p)}\cong\V_2^+\cong\Z/p^2\Z ,$ i.e. the Kervaire and Murthy
conjecture is true in our situation.

Let us choose $g\in G$ such that $g(\zeta_1)=\zeta_1^{1+p}$.
It is well-known due to Iwasawa that the subgroup
$\{s\in  S_1^{(p)}:\ g(s)=s  \}\cong S_0^{(p)}\cong \Z/p\Z .$
Therefore, $G$ acts on $S_1^{(p)}$ non-trivially and we can choose a generator
$s\in S_1^{(p)}$ such that $g^{-1}(s)=s^{1+p}$. Further, we can choose a generator
$v\in \V_2^+$ such that $<s,v>=\zeta_1.$

Suppose $G$ acts on $\V_2^+$ trivially. Then
$$\zeta_1 =<s,v>=<s,g(v)>=g(<g^{-1}(s),v >)=g(\zeta_1^{1+p})=\zeta^{1+2p} . $$
Clearly, it is impossible and hence, $G$ acts on $\V_2^+$ non-trivially.

\end{proof}
\subsection{$E$-duality}
Let us assume that $\lambda_i =1$ and $p^2$ does  divide $c_i$.

Results of this subsection are based on Chapter 8 of \cite{W}
Let $E$ be the group of real units of $\Z [\zeta_0] $ and in this subsection
we denote $U_{0,1}^+$ by $U .$ Since $p$ satisfies Vandiver's conjecture,
$U/(C(0))^+ = U/\bar{E}$, where $\bar{E}$ is the closure of $E$ in $U.$

Let us denote $E/E^{p^2}$ by $E_{p^2}$. It was proved in \cite{W} that
$\varepsilon_{p-i}   E_{p^2}\cong \Z/(p^2)$ and
$\varepsilon_{p-i}   U/\varepsilon_{p-i} \bar{E}\cong \Z/(p^{v_p (c_i)})=\Z_p /(c_i).$
according to Corollary 7.13.

Let $\eta\in E$ generate $\varepsilon_{p-i} E_{p^2}$. Let us consider
$\varepsilon_{p-i} (\eta)\in \varepsilon_{p-i} \bar{E}$. Since
$\varepsilon_{p-i} (E/E^{p^2}) =\varepsilon_{p-i} \bar{E}/\varepsilon_{p-i}\bar{E}^{p^2}$ and
$E^{p^2}\subset\bar{E}^{p^2}$, we see that
$\eta^{-1}\varepsilon_{p-i}(\eta)\in \bar{E}^{p^2} .$

Thus $\eta =\varepsilon_{p-i}(\eta)\gamma^{p^2}$ and we obtained the following
\begin{lemma}
$\eta\in E$ is a local $p^2$-power.
\end{lemma}

\begin{proof}
$\varepsilon_{p-i}(\eta)$ is  a local $p^2$-power because
$\varepsilon_{p-i}(\eta)\in\varepsilon_{p-i}\bar{E}=(\varepsilon_{p-i}   U)^{v_p (c_i)}$
and $v_p (c_i)\geq 2.$ Thus, $\eta\in E$ is a local $p^2$-power.

\end{proof}
\subsection{$L_p (1,\omega^{1-i})$  is not divisible by $p^2 \ ?$}
Now we can prove the first main result of this section
\begin{thm}
$L_p (1,\omega^{1-i})$ is not divisible by $p^2$.
\end{thm}
\begin{proof}
Assuming that $L_p (1,\omega^{1-i})$ is  divisible by $p^2$, in several steps we
will come to contradiction.

\begin{itemize}
  \item Clearly, $\eta$ is not a $p$-power in the field $F_0$ because it
  generates $\varepsilon_{p-i} (E/E^{p^2}).$
  \item $\eta$ is not a $p$-power in the field $F_1$ because otherwise
  $F_1=F_0 (\eta^{1/p})$ and $F_1$ becomes a non-ramified extension of $F_0 .$
  \item Hence, the extension $F_1 (\eta^{1/p^2})/F_1$ is non-ramified and consequently
  $\eta$ induces a character of $S_1^{(p)} .$
  \item Let $E_1$ be the subgroup of the group of units of $\Z [\zeta_1]$ which are local
  $p^2$-powers. Then $E_1$ generates a subgroup of the group of characters of $S_1^{(p)} .$
  The corresponding pairing between $S_1^{(p)} $ and $E_1$ satisfies
  $< gs, \epsilon> =g(< s, g^{-1}\epsilon>)$.
  \item Let $X\subseteq (S_1^{(p)})^*\cong \Z/(p^2)$ be the subgroup generated by $\eta .$
  Comparing formulas  $< gs, \epsilon> =g(< s, g^{-1}\epsilon>)$ and
  $< gs, v> =g(< s, g^{-1}v>)$ we can conclude that
  $X\cong \V_2^+$ as $G$-modules.
  \item However, the latter is impossible because $G$ acts trivially on $X$
  ($\eta\in F_0$) and non-trivially on $\V_2^+$.
\end{itemize}
The theorem is proved.
\end{proof}
\subsection{$L_p (0,\omega^{1-i})$  is not divisible by $p^2 \ ?$}
Now we can prove the second main result of this section
\begin{thm}
$L_p (0,\omega^{1-i})$ is not divisible by $p^2$.
\end{thm}
\begin{proof}
Assuming that $L_p (0,\omega^{1-i})$ is  divisible by $p^2$, in several steps we
will come to contradiction.

 We remind the reader that we already know that $\lambda_i=1$ and $c_i$
 is divisible by $p$ but not by $p^2 .$ Let us assume first that
 $v_p (L_p (0,\omega^{1-i}))=v_p (b_i)=2 .$ The reader will see that the general
 case will be possible to treat exactly in the same way.

\begin{itemize}
  \item Since $S_{0,i}\cong\Z/(p^2)$ and $S_{1,i}\cong\Z/(p^3)$ under
  our assumptions, we deduce that $S_{1,i}^G\cong\Z/(p^2)=S_{1,i}^p .$
  \item Our previous computations show that
  $\varepsilon_{p-i} \V_1^+\cong \F_p\cong \varepsilon_{p-i}(U_{0,1}/C(0))$ and
  $\varepsilon_{p-i} \V_2^+\cong \Z/(p^2)\cong \varepsilon_{p-i}(U_{1,1}/C(1))$.
  \item Since there exists a canonical projection of $G$-modules $U_{1,1}\to \V_2^+$,
  we deduce that $\varepsilon_{p-i} \V_2^+\cong \varepsilon_{p-i}(U_{1,1}/C(1))$
  as $G$-modules.
  \item $(U_{1,1}/C(1)))^G\cong U_{0,1}/C(0))$ by Theorem 7.10, we see that
  $G$ acts on $\varepsilon_{p-i} \V_2^+$ non-trivially.
  \item By the main Kervaire--Murthy theorem (\cite{KM}),
  $(\varepsilon_{p-i} \V_2^+)^*$ is isomorphic to a subgroup of $S_{1,i}$ as $G$-modules.
  Comparing orders of the involved groups we get $(\varepsilon_{p-i} \V_2^+)^*\cong (S_{1,i}^G$
  as $G$-modules.
  \item Let us fix $g\in G$ such that $g(\zeta_1)=\zeta_1^{1+p}$.
  Let us choose $v\in \varepsilon \V_2^+$ so that $g^{-1}(v)=v^{1+p}$
  and $s\in S_{1,i}^G$ satisfying $<s,v>=\zeta_1 .$
  \item Now, as before let us perform a simple computation:
  $$ <s,v>=\zeta_1= <gs,v>=g<s,g^{-1}(v)>=g(\zeta_1^{1+p})=\zeta_1^{1+2p}.$$
\end{itemize}
The latter equality is impossible and the theorem is proved.
\end{proof}

\subsection{Correction to the previous results of this section }

The results of this section must be corrected. What went wrong?
Let us take a look at the proof of Theorem 8.1. We chose a generator
of $g\in G$ such that $g(\zeta_1 )=\zeta_1^{1+p}$ and and a generator
$s\in S_{1,i}$ such that $g^{-1} (s)=s^{1+p} .$

Unfortunately, these two choices might be incompatible. 
We can only claim that $g(s)=s^{1+kp} $ with an integer $k$ defined
modulo $p .$ Similarly, $g(v)=v^{1+lp} $, where $v\in\V_2^+$ satisfies
$<s,v>=\zeta_1 .$ 

Further, since $<g(s),g(v)>=g(\zeta_1 ),$ we see that $k+l=1 .$
Assuming that $L_p (1,\omega^{1-i})$ is  divisible by $p^2$, we can deduce
that $l=0\ {mod} ( p)$ and hence $g(s)=s^{1+p} .$

Let us consider $S_{1,i}$ as an $\Z_p [[T]] /(f_i (T),(1+T)^p -1)$-module.
It follows that $T-p$ acts trivially on $S_{1,i}\cong\Z/(p^2)$. Since $\lambda_i =1$,
we infer that $f_i (T)$ has a root of the form $T=p+ap^2$ for some $a\in\Z_p .$

Furthermore, $T=(1+p)^s -1$ and hence $s_0 =log_{1+p} (1+p+ap^2 )=1+bp$ is a zero
of $L_p (s,\omega^{1-i})$. We have proved
\begin{thm}
Let $\lambda_i =1$.
If $L_p (s,\omega^{1-i})$ has no zeroes of the form 
$$s_0 =log_{1+p} (1+p+ap^2 )=1+bp ,\ b\in\Z_p,$$
then  $L_p (1,\omega^{1-i})$ is not divisible by $p^2$. 
\end{thm}
The case $k=0, l=1$ can be conditionally excluded  using a similar result.
\begin{thm}
Let $\lambda_i =1$.
If $L_p (s,\omega^{1-i})$ has no zeroes of the form
$$s_0 =log_{1+p} (1+ap^2 )=bp ,\ b\in\Z_p,$$
then  $L_p (0,\omega^{1-i})$ is not divisible by $p^2$. 
\end{thm}
\begin{rem}
The previous conditional theorem proves the Kervaire and Murthy conjecture (conditionally)
with only one possible exception: $\lambda_i=p-1.$
\end{rem}

\section{Concluding remarks}

The Kervaire and Murthy conjecture has another interesting form. Let us
denote by ${\A}(F_{n})$ the ring of adeles of the field $F_{n}$. Let $w$ be
a valuation of $F_{n}$, different from $\mu_{n}=(1-\zeta_{n})$. Let
$\mathbb{Q}_{w}$ be the completion of ${\Z}[\zeta_{n}]$ at $w$. Let us
consider the following subgroup $K_{p^{n+1}-1}$ of $GL(1, {\A}(F_{n}))$,
namely
$$
K_{p^{n+1}-1}=GL(1, \mathbb{Q})\times U_{n,p^{n+1}-1}\times \prod
GL(1,\mathbb{Q}_{w}).
$$
Then the Kervaire and Murthy conjecture can be formulated as

\begin{con}
$(S^{-}_{n})^{\ast} \cong \{ GL(1,F_{n}) \setminus GL( 1,{\A}(F_{n})) / K_{p^{n+1}-1}\}^{+}_{(p)}$
\end{con}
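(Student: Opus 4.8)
The plan is to compute the right‑hand side: I claim that, after taking the $+$‑part and the $p$‑Sylow subgroup, the double coset space is canonically $\mathcal{V}^{+}_{n+1}$, so that the conjecture becomes the assertion $\mathcal{V}^{+}_{n+1}\cong(S^{-}_{n})^{\ast}$, which is the Kervaire--Murthy conjecture recorded in the Introduction. Thus the reformulation is what carries real content; the isomorphism $\mathcal{V}^{+}_{n+1}\cong(S^{-}_{n})^{\ast}$ itself is not proved here, only its mod $p$ shadow.

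First I would identify $GL(1,F_{n})\setminus GL(1,{\A}(F_{n}))/K_{p^{n+1}-1}$ with the ray class group $C_{\mathfrak{m}}(F_{n})$ of $F_{n}$ of conductor $\mathfrak{m}=\mu^{p^{n+1}-1}_{n}$. The local data defining $K_{p^{n+1}-1}$ — the full local unit group at every finite $w\neq\mu_{n}$, the subgroup $U_{n,p^{n+1}-1}=1+\mu^{p^{n+1}-1}_{n}$ at $\mu_{n}$, and an archimedean factor imposing no condition (since $F_{n}$ is totally complex, so this factor is connected and must be modded out entirely) — is precisely the idelic data cutting out $C_{\mathfrak{m}}(F_{n})$. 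Next I would invoke the standard exact sequence
$$
0\longrightarrow U({\Z}_{p}[\zeta_{n}]/\mu^{p^{n+1}-1}_{n})\big/\operatorname{im}(E_{n})\longrightarrow C_{\mathfrak{m}}(F_{n})\longrightarrow C(F_{n})\longrightarrow 0 .
$$
Since $\mathfrak{m}$ is a power of the unique prime over $p$, the ring ${\Z}_{p}[\zeta_{n}]/\mu^{p^{n+1}-1}_{n}$ is the $\mu_{n}$‑adic completion of ${\Z}[\zeta_{n}]$ reduced mod $\mu^{p^{n+1}-1}_{n}$, so the leftmost group is $U_{n}/(U_{n,p^{n+1}-1}\cdot E_{n})$, which by the Theorem of Section~2 expressing $\mathcal{V}_{n}$ as $U_{n-1}/(U_{n-1,p^{n}-1}\cdot E_{n-1})$ — applied with $n$ replaced by $n+1$ — is exactly $\mathcal{V}_{n+1}$.

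Now I would pass to $p$‑Sylow subgroups of the displayed sequence: the functor $(-)_{(p)}$ is exact, $\mathcal{V}_{n+1}$ is already a $p$‑group, and $C(F_{n})_{(p)}=S_{n}$, so
$$
0\longrightarrow\mathcal{V}_{n+1}\longrightarrow (C_{\mathfrak{m}}(F_{n}))_{(p)}\longrightarrow S_{n}\longrightarrow 0
$$
compatibly with the action of complex conjugation $c$. Taking $c$‑invariants is exact here because $p$ is odd; together with $S^{+}_{n}=0$ for semi‑regular $p$ (the equality $S_{n}=S^{-}_{n}$ from the Kervaire--Murthy theorem recalled in the Introduction) this yields $(C_{\mathfrak{m}}(F_{n})^{+})_{(p)}\cong\mathcal{V}^{+}_{n+1}$, i.e. the right‑hand side of the conjecture equals $\mathcal{V}^{+}_{n+1}$. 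Hence the conjecture is equivalent to $\mathcal{V}^{+}_{n+1}\cong(S^{-}_{n})^{\ast}=S^{\ast}_{n}$.

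The hard part is exactly this last isomorphism — the Kervaire--Murthy conjecture itself, which remains open. What the present paper contributes toward it is the mod $p$ statement $S^{(p)}_{n}\cong(\mathcal{V}^{+}_{n+1}/(\mathcal{V}^{+}_{n+1})^{p})^{\ast}$ together with the order formula $|\mathcal{V}^{+}_{n+1}|=p^{r_{0}+\ldots+r_{n}}$; a complete proof of the displayed conjecture would require upgrading the mod $p$ isomorphism to an isomorphism of finite abelian groups, for instance by controlling the ${\Z}_{p}[[T]]$‑module structure of $C_{\mathfrak{m}}(F_{n})^{+}$ through the maps $\pi_{\mathcal{V}},i_{\mathcal{V}}$ and the Iwasawa description of $S_{n}$ of Section~5 tightly enough to force a term‑by‑term match with $S^{\ast}_{n}$.
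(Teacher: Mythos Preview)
Your reduction of the adelic double coset to $\mathcal{V}^{+}_{n+1}$ via the ray class group of conductor $\mu_{n}^{p^{n+1}-1}$ is correct, and it makes explicit what the paper only asserts in one sentence --- that this is ``another interesting form'' of the Kervaire--Murthy conjecture. The paper gives no argument for that equivalence; your use of the exact sequence for $C_{\mathfrak{m}}(F_{n})$, the identification of its kernel with $U_{n}/(U_{n,p^{n+1}-1}\cdot E_{n})$, and the appeal to the Theorem of Section~2 (with $n$ shifted to $n+1$) to recognise this as $\mathcal{V}_{n+1}$, together with $S_{n}^{+}=0$, is exactly the computation one needs.

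There is, however, no proof to compare to: the statement is labelled a \emph{Conjecture} in the paper, and the paper does not attempt to prove it. More to the point, your conclusion that the Kervaire--Murthy conjecture ``remains open'' misreads the paper. The sentence immediately following the conjecture says it is \emph{not true} even for $n=0$ unless $p^{2}\nmid B_{1,\omega^{-i}}$ for all relevant $i$, and the paper then announces (deferring the proof to a sequel) explicit structure formulas for $\mathcal{V}^{+}_{n}$ and $S_{n,i}$ from which it follows that the conjecture holds \emph{if and only if} those divisibility conditions are met. So the upshot is not that one must ``upgrade the mod $p$ isomorphism'' to settle the conjecture; rather, the paper is recording a reformulation and then indicating that it is in general false.
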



\section{Afterword}
This is my last paper before my retirement from the Department of Mathematical Sciences,
Chalmers University of Technology/ University of Göteborg, Sweden after almost 25 years of
professional service. I appreciated and enjoyed very much its friendly and calm atmosphere, which encouraged
and helped
me to work on the problems of my interest "i lugn och ro."


\begin{thebibliography}{9}
\itemsep-3pt

\bibitem{CF} \emph{Algebraic number theory.} Edited by I.W.S. Cassels and A.
Fr\"{o}hlich, Academic Press, London, 1967.

\bibitem{HS} O. Helenius, A. Stolin. \emph{Fine structure of class groups
and the Kervaire-Murthy conjectures.} Preprint Chalmers University of
Technology 2002:58, 2002:64.



\bibitem{KM} Kervaire, M., Murthy, M.P. \emph{On the projective class group
of cyclic groups of prime power order.} Comment. Math. Helv. 52 (1977), no.
3, 415-452.


\bibitem{S1} A. Stolin. \emph{An explicit formula for the Picard group of
the cyclic group of order $p^{2}$.} Proc. Amer. Math. Soc. 121 (1994), no.
2, 375-383.

\bibitem{S2}
 A. Stolin. \emph{On the Picard group of the integer group ring of the cuclic
 $p$-group and of rings close to it.} Commutative ring theory, 443-455.
 Lecture Notes in Pure and Appl. Math., 185, Dekker, New York, 1997.

\bibitem{S3} A. Stolin. \emph{On the Picard group of the integer group ring
of the cyclic $p$-group and certain Galois groups.} I. J. Number Theory, 72
(1998), no. 1, 48-66.

\bibitem{U} S. Ullom. \emph{Class group of cyclotomic fields and group
rings.} I. London Math. Soc. (2) 17 (1978), no. 2, 231-239.


\bibitem{W} L. Washington. \emph{Introduction to cyclotomic fields.} Second
edition. Graduate Texts in Math., 83. Springer-Verlag, New York, 1997.


\end{thebibliography}
\end{document}